\newcommand{\R} {\mathbb{R}}
\newcommand{\N} {\mathbb{N}}
\newcommand{\eps}{\epsilon}
\newcommand{\eto}{\stackrel{\eps\to 0}{\longrightarrow}}
\newcommand{\weto}{\stackrel{\eps\to 0}{\rightharpoonup}}
\def\eto{\buildrel \epsilon\to 0\over\longrightarrow }
\def\kti{\buildrel k\to\infty\over\longrightarrow }
\title{Elliptic and parabolic problems in thin domains with doubly weak oscillatory boundary\footnote{
This first author (JMA) has been partially  supported by grants MTM2016-75465-P,  PID2019-103860GB-I00, ICMAT Severo Ochoa project SEV-2015-0554, MICINN, Spain and Grupo de Investigaci\'on CADEDIF, UCM.
 The second author (MVP) has  been partially supported by grant MTM2016-75465-P MICINN, Spain and Grupo de Investigaci\'on CADEDIF, UCM.}   }
\author{Jos\'{e} M. Arrieta\footnote{Departamento de An\'alisis Matem\'atico y Matem\'atica Aplicada, Universidad Complutense de Madrid, 28040 Madrid and Instituto de Ciencias Matem\'aticas
CSIC-UAM-UC3M-UCM, C/Nicol\'as Cabrera 13-15, Cantoblanco, 28049 Madrid, Spain. Email: arrieta@mat.ucm.es}
and Manuel Villanueva-Pesqueira\footnote{Departamento de Matem\'atica Aplicada, ICAI, Universidad Pontificia de Comillas, Madrid, Spain,  $\qquad\qquad\qquad$ Email: mvillanueva@icai.comillas.edu} 
}
\date{ }
\begin{document}

\maketitle
%
%
%
%
\rightline {Dedicated to Tomás Caraballo on the occasion of his 60-th birthday}

\par\bigskip\bigskip

{\footnotesize 
\par\noindent {\bf Abstract:} In this work we consider higher dimensional thin domains with the property that both boundaries, bottom and top, present oscillations of weak type. We consider the Laplace operator with Neumann boundary conditions and analyze the behavior of the solutions as the thin domains shrinks to a fixed domain $\omega\subset \R^n$. We obtain the convergence of the resolvent of the elliptic operators in the sense of compact convergence of operators, which in particular implies the convergence of the spectra.  This convergence of the resolvent operators will allow us to conclude  the global dynamics, in terms of the global attractors of a reaction diffusion equation in the thin domains. In particular, we show the upper semicontinuity of the attractors and stationary states. An important case treated is the case of a quasiperiodic situation, where the bottom and top oscillations are periodic but with period rationally independent.

\vskip 0.5\baselineskip

%

\vspace{11pt}

\noindent
{\bf Keywords:}
Thin domain; oscillatory boundary; compact convergence; attractors; quasiperiodic oscillations

\vspace{6pt}
\noindent
{\bf 2000 Mathematics Subject Classification:}  35B27, 35B40, 35B41, 74K10, 74Q05

}


%
%

\numberwithin{equation}{section}
\newtheorem{theorem}{Theorem}[section]
\newtheorem{lemma}[theorem]{Lemma}
\newtheorem{corollary}[theorem]{Corollary}
\newtheorem{proposition}[theorem]{Proposition}
\newtheorem{definition}[theorem]{Definition}
\newtheorem{remark}[theorem]{Remark}
\allowdisplaybreaks

\section{Introduction}
\selectlanguage{english}
We are interested in analyzing the behavior of the solutions of certain Partial Differential Equations of elliptic and parabolic type which are posed in a varying domain $R^\epsilon$. This domain is a thin domain in $\R^{n+1}$ which presents an oscillatory behavior at the boundary and it is given as the region between two oscillatory functions, that is, 
\begin{equation}\label{thin-appen}
R^\epsilon = \Big\{ (x, y) \in \R^{n+1} \; | \;  x \in \omega\subset \R^n,  \; -\eps k^1(x,\eps)< y < \epsilon k^2(x,\eps) \Big\},
 \end{equation}
 where $\omega$ is a smooth bounded domain in $\R^{n}$ and the functions $k^1,k^2$ satisfy certain hypotheses, see {\bf (H)} below.  Observe that the lower part of the boundary is described  by the function $k^1$ and the upper part of the boundary by $k^2$. These two function may present oscillations.  See Figure \ref{thin1 appen} for a two-dimensional example.

One equation we are interested in is the following linear elliptic equation
\begin{equation} \label{1OPI0}
\left\{
\begin{gathered}
- \Delta w^\epsilon + w^\epsilon = f^\epsilon
\quad \textrm{ in } R^\epsilon, \\
\frac{\partial w^\epsilon}{\partial \nu^\epsilon} = 0
\quad \textrm{ on } \partial R^\epsilon,
\end{gathered}
\right. 
\end{equation}
where $f^\epsilon \in L^2(R^\epsilon)$, 
and $\nu^\epsilon$ is the unit outward normal to $\partial R^\epsilon$.  But eventually we will be interested in saying something about the behavior of the global dynamics of a reaction diffusion equation of the type 
 
\begin{equation} \label{1OPI0 parabolic}
\left\{
\begin{gathered}
w^\epsilon_t - \Delta w^\epsilon + w^\epsilon = f(w^\epsilon)
\quad \textrm{ in } R^\epsilon, t>0, \\
\frac{\partial w^\epsilon}{\partial \nu^\epsilon} = 0
\quad \textrm{ on } \partial R^\epsilon.
\end{gathered}
\right. 
\end{equation}
 
As we will see and as it is already well known, see for instance \cite{ACar,ACarLo1,ArrCarPerSil,Per},  the analysis of the convergence of \eqref{1OPI0} will basically dictate the behavior of the dynamics of \eqref{1OPI0 parabolic} in terms of the behavior of the global dynamics (continuity of solutions, continuity of equilibria upper and lower semicontinuity of attractors, etc).  As a matter of fact, if the solutions of  \eqref{1OPI0} approach in certain sense the solutions of the limiting problem $-Lw+w=f$ in $\omega$, where $L$ will be an elliptic operator then, the solutions of \eqref{1OPI0 parabolic} will converge in certain sense to the solutions of the equation $w_t - L w^\epsilon + w^\epsilon = f(w^\epsilon)$ in $\omega$.

%

\par\bigskip\bigskip 
 In order to simplify the notation, we write every point in $\R^{n+1}$ as follows
 $$(x,y)\in \R^{n+1}, \hbox{ with } x=(x_1, x_2, \cdots, x_n) \in \R^n \hbox{ and } y \in \R.$$

Let $k^i$ be a function, $i=1,2$,
\begin{equation*}
\begin{array}{rl}
k^i: \omega\times (0,1) &\longrightarrow \R^+ \\
 (x,\eps)&\longrightarrow k^i(x,\eps)=k_\eps^i(x),
 \end{array}
 \end{equation*}
 such that
 \begin{itemize}
\item[\bf{(H.1)}] $k_\eps^i$ is a $C^1$ function in the first variable and
\begin{equation}\label{main}
\eps\Big|\frac{\partial k^i_\eps}{\partial x_j}(x)\Big|\eto 0 \hbox{ uniformly in } \omega, \quad j=1,\dots,n.
\end{equation}
\item[\bf{(H.2)}] There exist two positive constants independent of $\eps$ such that 
\begin{equation}\label{boundk}
0<C^i_1\leq k_\eps^i( \cdot) \leq C^i_2.
\end{equation}
\item[\bf{(H.3)}] There exists a function $K^i$ in $L^2(\omega)$ such that
$$k^i_\eps \weto K^i \hbox{ w-}L^2(\omega).$$
\end{itemize}


 Indeed, the thickness of the domain has order $\eps$ and we say that the domain presents weak oscillations due to the convergence \eqref{main}. An interesting particular example of the introduced general setting is the case where  
 \begin{equation}\label{per}
 k^1_\eps(x)=h(x/\eps^\alpha), \quad k^2_\eps(x)=g(x/\eps^\beta),
 \end{equation}
with $0<\alpha, \beta<1$ and the functions $g,h \,: \R^n \to \R $ are $C^1$ periodic functions.

  \begin{figure}[H]
  \centering
    \includegraphics [width=8cm, height=3.5cm]{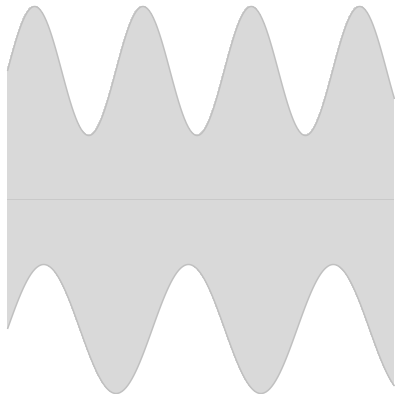}
    \caption{Thin domain $R^\eps$ with doubly weak oscillatory boundary}
    \label{thin1 appen}
\end{figure}

The variational formulation of (\ref{1OPI0}) is the following:  find $w^\epsilon \in H^1(R^\epsilon)$ such that 
\begin{equation} \label{VFP}
\int_{R^\epsilon} \Big\{ \nabla w^\eps \nabla \varphi
+ w^\epsilon \varphi \Big\} dx_1 dx_2 = \int_{R^\epsilon} f^\epsilon \varphi dx_1 dx_2, 
\, \forall \varphi \in H^1(R^\epsilon).
\end{equation}

Observe that, for fixed $\eps>0$, the existence and uniqueness of solution to problem (\ref{1OPI0}) is guaranteed by Lax-Milgram Theorem. Then, we will analyze the behavior of the solutions as the parameter $\eps$ tends to zero. In particular, we first perform suitable change of variables which allows us to substitute, in some sense, the original problem \eqref{1OPI0} posed in an $(n+1)$-oscillating thin domain into a simpler problem with oscillating coefficients posed in an $n-$dimensional fixed domain. Notice that, this fact is in agreement with the intuitive idea that the family of solutions $u^\eps$ should converge to a function of just $n$ variables as $\eps$ goes to zero since the domain is thin. Subsequently,  by using the previous results and adapting well-known techniques in homogenization we obtain explicitly the homogenized limit problem for the interesting cases where $k^i_\eps$, $i=1,2$, satisfy \eqref{per}. In this respect, notice that the coupled effect of both oscillating boundaries requiere to consider more general settings than classical periodicity, for instance, quasi-periodicity or reiterated homogenization framework.

It is worth observing that we cannot apply a direct homogenization technique to obtain the limit problem of \eqref{1OPI0} since there is not a representative cell which describe the domain and, moreover, the effect of the oscillations at both boundaries, top and bottom, cannot be separated in an easy way since at the same time that the boundaries oscillate, the domain is shrinking and therefore the effect of the oscillations at one boundary is coupled in a nontrivial way with the oscillations at the other boundary.

Finally, we show that the performed analysis of the elliptic linear problem \eqref{1OPI0} together with known results from the theory on nonlinear dynamics of dissipative systems allows to analyze the convergence properties of the solutions and attractors of the following semilinear parabolic evolution equation

\begin{equation} \label{1OPI0 parabolic}
\left\{
\begin{gathered}
w^\epsilon_t - \Delta w^\epsilon + w^\epsilon = f(w^\epsilon)
\quad \textrm{ in } R^\epsilon, t>0, \\
\frac{\partial w^\epsilon}{\partial \nu^\epsilon} = 0
\quad \textrm{ on } \partial R^\epsilon,
\end{gathered}
\right. 
\end{equation}
where $\nu^\epsilon$ is the unit outward normal to $\partial R^\epsilon$ and the function $f \,: \R \to \R $ is a $C^2-$ function with bounded derivatives. Moreover, since we are interested in the behavior of solutions as $t\to \infty$ and its dependence with respect to the small parameter $\eps$, we will require that the solutions of \eqref{1OPI0 parabolic} be bounded for large values of time. A natural assumption to obtain this boundedness of the solutions is expressed in the following dissipative condition
\begin{equation}\label{dissipative}
\limsup_{|s|\to \infty} \frac{f(s)}{s}<0.
\end{equation}

In order to accomplish our goal, we consider here the linear parabolic problems associated to the perturbed equation \eqref{1OPI0 parabolic} and its limit in the abstract framework given by \cite{Ha,He81}. Then, we show that under an appropriate notion of convergence, the spectral convergence and the convergence of the corresponding linear semigroups may be obtained. Moreover, we are in conditions to transfer this information to the nonlinear dynamics through the variation of constants formula. This way, the continuity of nonlinear semigroups and the upper semicontinuity of the family of attractors, as well as, the upper semicontinuity of the set of stationary states can be established. We refer to the fundamental works \cite{Ha}, \cite{BabVis},\cite{Tem},\cite{SelYou},\cite{CarLanRob} to understand the asymptotic dynamics of evolutionary equations and the behavior under perturbations. Se also, \cite{KapVal, KapKasVal} and references therein for related works.

We would like to mention that there are several papers addressing the problem of studying the effect of rough boundaries on the behavior of the solution of partial differential equations.  Among others, we can mention \cite{C1, komo} in the context of flows, \cite{BGG07A, AnBra} from the point of view of the elasticity and \cite{BePyChe} where complete asymptotic expansions of the solutions were studied. In particular, let us point out that the convergence properties of the solutions of the elliptic equation \eqref{1OPI0} have been discussed in several papers in the literature for different kind of thin domains. It is well known which is the limit if the thin domain does not present oscillations,
see for instance, \cite{HR, R, SP, ArrSan}.  The case where the thin domain presents weak roughness, $k^1_\eps(x)=h(x/\eps^\alpha), \quad k^2_\eps(x)=0$ with  $0<\alpha <1$, was treated in \cite{Arr} using changes of variables and rescaling the thin domain as in classical works in thin domains with no oscillations, see for instance \cite{HR, R}. The resonant case, $k^1_\eps(x)=h(x/\eps^\alpha), \quad k^2_\eps(x)=0$ with  $\alpha= 1$, was studied in \cite{ArrCarPerSil, MP2} using standard techniques in homogenization,  see \cite{BenLioPap, CioPau, SP, CiorDonato} for a general introduction to the homogenization theory. More recently, the homogenized limit problem for the case of thin domains with a fast oscillatory boundary, $k^1_\eps(x)=h(x/\eps^\alpha), \quad k^2_\eps(x)=0$ with $\alpha>1$, was obtained in \cite{ArrPer2013} by decomposing the domain in two parts separating the oscillatory boundary. Moreover, in \cite{ArrVi2017} the previous cases were analyzed in a unified way adapting the recent unfolding operator method.

Indeed, we think that considering thin domains with doubly oscillatory boundary is a natural way to extend the models studied in previous papers to a more realistic situations where several microscopic scales appear. In fact, understanding how the complicated micro geometry of the thin structures affects the macro properties of the material is a very relevant issue in engineering and applied science. Moreover, let us point out since upper and lower boundary present different orders of frequency and profiles of oscillation it is not possible to analyze this kind of problems using a simple generalization of the methods applied to thin domains with only one oscillatory boundary.

 The case where one of the two oscillatory boundary presents an extremely high oscillatory behavior, $k^1_\eps(x)=h(x/\eps^\alpha), \quad k^2_\eps(x)=g(x/\eps^\beta)$ with $\alpha >1$, is well-know in the literature, see \cite{ArrPer2013} for the periodic case and \cite{Per} for the locally periodic case where the parabolic problem is also studied. The authors of both papers combine the classical oscillatory test functions method of Tartar together with an adaptation of the method from \cite{ArrPer2013} in order to get the homogenized limit problem. However the approach introduced in both papers is essentially based on the fact that $\alpha>1$.  
 
Let us point out that despite the works mentioned above, the case introduced in this paper has not been treated previously in the literature. As a main novelty, we approach a problem posed in a general thin domain in $\R^{n+1}$ with weak oscillations at top and bottom boundary, see assumption \eqref{main} without extra conditions on the periodicity by a more studied equation with oscillating coefficients in $\R^n$. The crucial idea is that since the oscillations at the boundary are not too ``wild'' we can transform the original problem in $R^\eps$ into a one less dimensional simpler problem  posed in a fixed domain through diffeomorphisms which will depend on the parameter $\eps$.

This paper is organized as follows.  In Section 2 we set up the notation and state important results which allows to reduce the problem to study the behavior of the solutions of an equation with oscillating coefficients in a fixed domain. In Section 3 and Section 4 we study the convergence properties of the elliptic and parabolic equation combining results of homogenization theory and the theory on nonlinear dynamics of dissipative systems 

We have given detailed proofs of the new results, when the weak oscillating boundaries play an important role, while, for the proofs involving routine procedures of homogenization theory or nonlinear dynamics of dissipative systems theory we refer to well-known results in the literature.

 \section{Reduction to an $n$-dimensional problem with oscillating coefficients}

In this section we concentrate in the study of the elliptic problem \eqref{1OPI0}
and start analyzing the behavior of the solutions as $\eps\to 0$. As a matter of fact, we will be able to reduce the study of \eqref{1OPI0} in the thin domain $R^\eps$ to the study of an elliptic problem with oscillating coefficients in the lower dimensional fixed domain $\omega$.   This dimension reduction will be the key point to obtain the correct limiting equation. 
In order to state the main result of this section, let us first make some definitions. We will denote by 

\begin{equation} \label{def-eta}
\displaystyle \eta^i(\eps)=\max_{i \in \{1\cdots n\}}\Big\{\sup_{x \in \omega}\Big|\eps \frac{\partial k^i_\eps}{\partial x_i}(x) \Big|\Big\}>0,\quad i=1,2\quad \hbox{ and }\quad  \eta(\eps)=\eta^1(\eps)+\eta^2(\eps).
\end{equation}
Observe that from hypothesis {\bf (H.1)} we have $\eta(\eps)\eto 0$

Also, we denote by $K_\eps(x)=k^1_\eps(x)+k_\eps^2(x)$ (that is $\eps K_\eps(x)$ is the thickness of the thin domain $R^\eps$ at the point $x\in \omega$),   $\displaystyle \hat f^\eps(x)=\frac{1}{\eps K_\eps(x)}\int_{-\eps k^1_\eps(x)}^{\eps k^2_\eps(x)} f^\eps(x,y)dy$  and we consider the problem:

\par\bigskip

 \begin{equation}\label{transformed-problem-hat}
\left\{
\begin{gathered}
- \frac{1}{K_\eps}\sum_{i=1}^n\frac{\partial}{\partial x_i}\Big(K_\eps \frac{\partial \hat w^\eps}{\partial x_i}\Big)+ \hat w^\eps = \hat f^\eps
\quad \textrm{ in } \omega, \\
\frac{\partial \hat w^\eps}{\partial \eta}  = 0  \quad \textrm{ on } \partial \omega.
\end{gathered}
\right. 
\end{equation}

Our main result in this section is

\begin{proposition}\label{main-reduction}
There exists a constant $C$ independent of $\eps>0$ such that for all $f^\eps\in L^2(R^\eps)$, we have 
\begin{equation}\label{main-estimate}
\|w^\eps-\hat w^\eps\|_{H^1(R_\eps)}^2\leq C\eta(\eps) \|f^\eps\|^2_{L^2(R_\eps)}.
\end{equation}
\end{proposition}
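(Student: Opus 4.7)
The plan is to compare $w^\eps$ with the trivial extension $\tilde w^\eps(x,y):=\hat w^\eps(x)$ through a PDE residual argument. The first observation is that, taking a $y$-independent test function $\Phi(x,y)=\phi(x)$ in (\ref{VFP}) and using Fubini, one recovers exactly the weak form of (\ref{transformed-problem-hat}) for $\hat w^\eps$; equivalently, $\int_{R^\eps}[\nabla(w^\eps-\tilde w^\eps)\cdot\nabla\Phi+(w^\eps-\tilde w^\eps)\Phi]\,dxdy=0$ for every $\Phi\in V:=\{\phi(x):\phi\in H^1(\omega)\}$. Hence $\tilde w^\eps$ is the $H^1(R^\eps)$-orthogonal projection of $w^\eps$ onto the subspace $V$.

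Next, I would write down the strong equation satisfied by $\tilde w^\eps$ in $R^\eps$: expanding the divergence form of (\ref{transformed-problem-hat}) gives
\[
-\Delta\tilde w^\eps+\tilde w^\eps=\hat f^\eps+\frac{\nabla K_\eps}{K_\eps}\cdot\nabla\tilde w^\eps\quad\text{in }R^\eps,
\]
with homogeneous Neumann datum on the lateral part of $\partial R^\eps$ and, on the top/bottom graphs $y=\pm\eps k^i_\eps$, a non-homogeneous datum $\partial_\nu\tilde w^\eps=\mp(\eps\nabla k^i_\eps)\cdot\nabla\hat w^\eps/\sqrt{1+\eps^2|\nabla k^i_\eps|^2}$, pointwise bounded by $\eta(\eps)|\nabla\hat w^\eps|$ thanks to \textbf{(H.1)}. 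Subtracting from the equation for $w^\eps$, the difference $z^\eps:=w^\eps-\tilde w^\eps$ solves
\[
-\Delta z^\eps+z^\eps=(f^\eps-\hat f^\eps)-\frac{\nabla K_\eps}{K_\eps}\cdot\nabla\tilde w^\eps\ \text{in }R^\eps,
\]
with Neumann datum of order $\eta(\eps)|\nabla\hat w^\eps|$ on the top/bottom and zero on the sides.

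Testing this PDE against $z^\eps$ produces three residual terms to control. Because $f^\eps-\hat f^\eps$ has zero vertical mean, the pairing with $z^\eps$ coincides with the pairing against $z^\eps-\bar z^\eps$ and, by Poincar\'e in the thin direction, is bounded by $C\eps\|f^\eps\|_{L^2(R^\eps)}\|z^\eps\|_{H^1(R^\eps)}$. The boundary contribution $\int_{\partial R^\eps}\partial_\nu\tilde w^\eps\,z^\eps\,dS$ is estimated by combining $|\partial_\nu\tilde w^\eps|\le\eta(\eps)|\nabla\hat w^\eps|$ with a trace inequality on the thin domain and the a priori bound $\|\hat w^\eps\|_{H^1(\omega)}\le C\eps^{-1/2}\|f^\eps\|_{L^2(R^\eps)}$; the trace factor $\eps^{1/2}$ cancels the $\eps^{-1/2}$ to give a net bound $\eta(\eps)\|f^\eps\|\|z^\eps\|_{H^1(R^\eps)}$. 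The internal cross term collapses, because $\nabla K_\eps\cdot\nabla\tilde w^\eps$ is $y$-independent, to an integral on $\omega$ of $\eps\nabla K_\eps\cdot\nabla\hat w^\eps\,\bar z^\eps$, where $\bar z^\eps=\bar w^\eps-\hat w^\eps$; closing it requires an auxiliary bound $\|\bar w^\eps-\hat w^\eps\|_{L^2(\omega)}\le C\sqrt{\eta(\eps)/\eps}\|f^\eps\|$, obtained by integrating $-\Delta w^\eps+w^\eps=f^\eps$ in $y$ via Leibniz---which produces exactly the boundary terms $\eps\nabla k^i_\eps\cdot\nabla_x w^\eps|_{\text{top/bot}}$---and comparing the resulting divergence-form equation for $\bar w^\eps$ with (\ref{transformed-problem-hat}). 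Combining the three estimates with Young's inequality and absorbing a fraction of $\|z^\eps\|_{H^1(R^\eps)}^2$ into the left-hand side yields the advertised bound.

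The main obstacle is the internal cross term: a naive Cauchy--Schwarz pairing costs a factor $\eps^{-1}$ from $|\nabla K_\eps|\le C\eta(\eps)/\eps$, so one must exploit the $y$-independence of $\tilde w^\eps$ to reduce the integral to $\omega$ (gaining one power of $\eps$ from the thickness) and then separately establish the auxiliary comparison of $\bar w^\eps$ with $\hat w^\eps$ via the Leibniz-rule computation that extracts the factor $\eps\nabla k^i_\eps=O(\eta(\eps))$ together with a trace-Poincar\'e estimate $\|w^\eps|_{\text{top/bot}}-\bar w^\eps\|_{L^2(\omega)}\le C\eps^{1/2}\|\partial_y w^\eps\|_{L^2(R^\eps)}$. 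This interlocking of boundary, bulk, and averaging estimates---none of which is strong enough on its own---is where hypothesis \textbf{(H.1)} is fully exploited and where the precise power of $\eta(\eps)$ in the final bound is produced.
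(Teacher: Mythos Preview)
Your orthogonality observation is correct and elegant: testing \eqref{VFP} with $\Phi(x,y)=\phi(x)$ and using the definition of $\hat f^\eps$ does show that $\tilde w^\eps(x,y):=\hat w^\eps(x)$ is the $H^1(R^\eps)$-orthogonal projection of $w^\eps$ onto the $y$-independent functions, so that in fact $\|z^\eps\|_{H^1(R^\eps)}^2=\int_{R^\eps}f^\eps z^\eps$. The approach is genuinely different from the paper's, which instead passes through two changes of variables ($R^\eps\to R_a^\eps\to Q$) and an intermediate $(n{+}1)$-dimensional problem before reducing to $\omega$.

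However, the closing of the estimate has a real gap. The internal cross term reduces, as you say, to $\int_\omega(\eps\nabla K_\eps)\cdot\nabla\hat w^\eps\,\bar z^\eps\,dx$. Since $|\eps\nabla K_\eps|\le\eta(\eps)$ and $\|\nabla\hat w^\eps\|_{L^2(\omega)}\le C\eps^{-1/2}\|f^\eps\|_{L^2(R^\eps)}$, your claimed auxiliary bound $\|\bar z^\eps\|_{L^2(\omega)}\le C\sqrt{\eta/\eps}\,\|f^\eps\|$ yields at best
\[
\Big|\int_\omega(\eps\nabla K_\eps)\cdot\nabla\hat w^\eps\,\bar z^\eps\,dx\Big|\ \le\ C\,\eta\cdot\eps^{-1/2}\cdot(\eta/\eps)^{1/2}\,\|f^\eps\|^2\ =\ C\,\eta^{3/2}\eps^{-1}\,\|f^\eps\|^2,
\]
which dominates the target $C\eta\|f^\eps\|^2$ whenever $\eta(\eps)\gg\eps^2$---and in the model case $k^i_\eps(x)=g(x/\eps^\alpha)$ one has $\eta\sim\eps^{1-\alpha}\gg\eps^2$ for every $\alpha\in(0,1)$. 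The boundary term suffers from the same deficit: the trace estimate on the thin domain gives $\|z^\eps|_{\rm top}\|_{L^2(\omega)}\le C\eps^{-1/2}\|z^\eps\|_{H^1(R^\eps)}$, not $\eps^{+1/2}$, so the ``cancellation'' you invoke does not occur. (Subtracting the vertical mean $\bar z^\eps$ does recover a factor $\eps^{1/2}$ in the fluctuation, but the surviving $\bar z^\eps$-part recombines with the internal cross term without cancellation.) In short, the factor $|\nabla K_\eps|\sim\eta/\eps$ is genuinely large and your bookkeeping does not produce an offsetting power of $\eps$.

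The paper circumvents this precisely by the change of variables $S^\eps:Q\to R_a^\eps$: in the rescaled cylinder the vertical derivative satisfies $\|\partial_y u^\eps\|_{L^2(Q)}=O(\eps^{1/2})$, so in the cross terms of Lemma~\ref{2 transform} the factor $|\partial_i K_\eps|\le\eta/\eps$ is paired with $\partial_y u^\eps$ and the product is $O(\eta\eps^{-1/2})$, which does close. Your direct residual argument in $R^\eps$ has no such small vertical-derivative factor available, because $\tilde w^\eps$ is $y$-independent and the surviving gradient $\nabla\hat w^\eps$ carries no smallness. To salvage the direct route you would need either a sharper auxiliary estimate on $\bar z^\eps$ (namely $\|\bar z^\eps\|_{L^2(\omega)}=O(\eps^{1/2}\sqrt{\eta})\|f^\eps\|$, which is essentially the conclusion you are trying to prove) or an integration-by-parts that trades $\nabla K_\eps$ for a boundary contribution controllable by the Neumann condition on $w^\eps$; neither is supplied.
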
 

\par\bigskip

In order to prove this result, we will need to obtain first some preliminary lemmas.  
We start transforming equation \eqref{1OPI0} into an equation in the modified thin domain
\begin{equation}\label{thin-intro1 appen}
R_a^\epsilon = \Big\{ (x,\bar y) \in \R^{n+1} \; | \;  x \in \omega,  \; 0 < \bar y < \epsilon \, k_\eps^2(x) + \eps \,k_\eps^1(x) \Big\}.
 \end{equation}
(see Figure \ref{thin2 appen}) where it can be seen that we have transformed the oscillations of both boundaries into oscillations of just the boundary at the top.  For this, we considering  the following family  of diffeomorphisms
\begin{equation*}
\begin{array}{rl}
L^\eps: R_a^\epsilon &\longrightarrow R^\eps \\
 (\bar x, \bar y)&\longrightarrow (x, y) := (\bar x, \bar y - \eps \,k^1_\eps(\bar x) ).
 \end{array}
\end{equation*}
Notice that the inverse of this diffeomorphism is  $(L^\eps)^{-1}(x,y)=(x, y+\eps k^1_\eps(x))$. Moreover, from the structure of these diffeomorphisms and hypothesis {\bf (H1)} we easily get that there exists a constant $C$ such that the Jacobian Matrix of $L_\eps$ and $L_\eps^{-1}$ satisfy 
\begin{equation}\label{bound-jacobian}
\|JL^\eps\|_{L^\infty},\|J(L^\eps)^{-1}\|_{L^\infty}\leq C.
\end{equation}
Moreover, we also have  $det(JL^\eps)(\bar x, \bar y)=det(J(L^\eps)^{-1})( x, y)=1$.

We will show that the study of the limit behavior of the solutions of \eqref{1OPI0} is equivalent to analyze the behavior of the solutions of the following problem
\begin{equation} \label{OPI01 appen}
\left\{
\begin{gathered}
- \Delta v^\epsilon + v^\epsilon = f_1^\eps
\quad \textrm{ in } R_a^\epsilon, \\
\frac{\partial v^\epsilon}{\partial \nu^\epsilon} = 0
\quad \textrm{ on } \partial R_a^\epsilon,
\end{gathered}
\right. 
\end{equation}
where $\nu^\epsilon$ is the unit outward normal to $\partial R_a^\epsilon$ and 
\begin{equation}\label{def-f1eps}
f_1^\eps= f^\eps \circ L^\eps.
\end{equation}

%
 \begin{figure}[H]
  \centering
    \includegraphics [width=8cm, height=3cm]{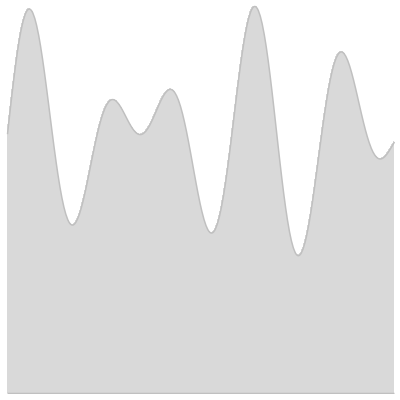}
    \caption{Thin domain $R_a^\epsilon$ obtained from $R^\eps$ of Figure \ref{thin1 appen} }
    \label{thin2 appen}
\end{figure}

Notice that 
$$\|f_1^\eps\|_{L^2(R^\eps_a)}^2=\int_{R^\eps_a}|f_\eps\circ L_\eps(\bar X)|^2d\bar X=\int_{R^\eps}|f_\eps(X)|^2|det(J(L_\eps^{-1}))(X)|dX=\|f^\eps\|_{L^2(R^\eps)}^2,$$
for all $f_\eps\in L^2(R^\eps)$. In particular, this implies that
\begin{equation}\label{a priori1 appen}
\|v_\eps\|_{H^1_a(R^\eps)}\leq \|f_1^\eps\|_{L^2(R^\eps_a)}=\|f^\eps\|.
_{L^2(R^\eps_a)}.
\end{equation}

\par\medskip

\begin{lemma}\label{1 transform}
Let $w^\eps$ and $v^\eps$ be the solutions of problems \eqref{1OPI0} and \eqref{OPI01 appen} respectively. Then, there is a constant $C>0$, independent of $\eps$, such that 
\begin{equation}\label{eq-Lemma}
||(w^\eps\circ L^\eps) - v^\eps||_{H^1(R_a^\epsilon)}^2 \leq C\eta^1(\eps)\|f^\eps\|_{L^2(R_\eps)}^2.
\end{equation}
\end{lemma}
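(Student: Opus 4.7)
The plan is to pull back the equation for $w^\eps$ from $R^\eps$ to the modified thin domain $R_a^\eps$ using the diffeomorphism $L^\eps$, and compare the resulting perturbed PDE with the equation satisfied by $v^\eps$. Set $\tilde w^\eps = w^\eps \circ L^\eps \in H^1(R_a^\eps)$. Changing variables via $L^\eps$ in the weak formulation \eqref{VFP} (using $\det JL^\eps \equiv 1$ and the identity $\nabla_x w^\eps = (JL^\eps)^{-T}\nabla_{\bar x}\tilde w^\eps$) converts the problem for $w^\eps$ into
\begin{equation*}
\int_{R_a^\eps}\bigl(A^\eps\nabla \tilde w^\eps\cdot\nabla\varphi + \tilde w^\eps\varphi\bigr)\,d\bar x\,d\bar y = \int_{R_a^\eps} f_1^\eps\,\varphi\,d\bar x\,d\bar y,\qquad\forall\varphi\in H^1(R_a^\eps),
\end{equation*}
where $A^\eps = (JL^\eps)^{-1}(JL^\eps)^{-T}$. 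A direct computation using the block-triangular form of $JL^\eps$ (namely $JL^\eps$ has $I_n$ in the upper-left block, $0$ in the upper-right, $-\eps\nabla k^1_\eps$ in the lower-left and $1$ in the lower-right) gives
\begin{equation*}
A^\eps = \begin{pmatrix} I_n & \eps\nabla k^1_\eps \\ \eps(\nabla k^1_\eps)^T & 1+\eps^2|\nabla k^1_\eps|^2\end{pmatrix}.
\end{equation*}
By definition of $\eta^1(\eps)$ in \eqref{def-eta}, every off-diagonal entry is bounded in absolute value by $\eta^1(\eps)$, and the lower-right entry deviates from $1$ by at most $\eta^1(\eps)^2$. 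Hence $\|A^\eps - I\|_{L^\infty(R_a^\eps)}\le C\eta^1(\eps)$ for $\eps$ small.

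Next, since $v^\eps$ solves the unperturbed problem \eqref{OPI01 appen} with the same right-hand side $f_1^\eps$, subtracting the two weak formulations yields, for every $\varphi\in H^1(R_a^\eps)$,
\begin{equation*}
\int_{R_a^\eps}\bigl(\nabla(\tilde w^\eps-v^\eps)\cdot\nabla\varphi + (\tilde w^\eps-v^\eps)\varphi\bigr) = -\int_{R_a^\eps}(A^\eps - I)\nabla \tilde w^\eps\cdot\nabla\varphi.
\end{equation*}
Testing with $\varphi = \tilde w^\eps - v^\eps$ and applying the Cauchy--Schwarz inequality gives
\begin{equation*}
\|\tilde w^\eps - v^\eps\|_{H^1(R_a^\eps)}^2 \le C\,\eta^1(\eps)\,\|\nabla \tilde w^\eps\|_{L^2(R_a^\eps)}\,\|\tilde w^\eps - v^\eps\|_{H^1(R_a^\eps)},
\end{equation*}
so that $\|\tilde w^\eps-v^\eps\|_{H^1(R_a^\eps)}\le C\eta^1(\eps)\,\|\nabla\tilde w^\eps\|_{L^2(R_a^\eps)}$.

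It remains to bound $\|\nabla\tilde w^\eps\|_{L^2(R_a^\eps)}$ by $\|f^\eps\|_{L^2(R^\eps)}$. Using the a priori estimate $\|w^\eps\|_{H^1(R^\eps)}\le\|f^\eps\|_{L^2(R^\eps)}$ (from testing \eqref{VFP} against $\varphi=w^\eps$), together with $\det J(L^\eps)^{-1}\equiv 1$ and the uniform bound \eqref{bound-jacobian} on $J(L^\eps)^{-1}$, a second change of variables shows $\|\tilde w^\eps\|_{H^1(R_a^\eps)}\le C\|w^\eps\|_{H^1(R^\eps)}\le C\|f^\eps\|_{L^2(R^\eps)}$. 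Squaring and absorbing $\eta^1(\eps)^2\le C\eta^1(\eps)$ (valid for $\eps$ small) yields \eqref{eq-Lemma}. The only delicate point is the explicit computation of $A^\eps - I$ and verifying that each entry is controlled by $\eta^1(\eps)$; once that is in place the rest is a standard energy estimate for a perturbation of the coefficient matrix.
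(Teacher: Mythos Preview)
Your proof is correct and follows essentially the same route as the paper: pull back the weak formulation of \eqref{1OPI0} to $R_a^\eps$, subtract the weak formulation of \eqref{OPI01 appen}, test with the difference, and use that the perturbation of the coefficient matrix is $O(\eta^1(\eps))$. Your matrix presentation via $A^\eps=(JL^\eps)^{-1}(JL^\eps)^{-T}$ is a cleaner packaging of exactly the same computation the paper carries out coordinate by coordinate. The only minor difference is in the closing bound: the paper controls $\|\nabla\tilde w^\eps\|$ by writing $\|\tilde w^\eps\|_{H^1}\le\|\tilde w^\eps-v^\eps\|_{H^1}+\|v^\eps\|_{H^1}$ and absorbing, whereas you bound $\|\tilde w^\eps\|_{H^1(R_a^\eps)}$ directly from the a~priori estimate for $w^\eps$ and \eqref{bound-jacobian}; both are valid and yield the same estimate.
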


\begin{proof}
From the definition of $L^\eps$ we have
\begin{align*}
& \frac{\partial (w^\epsilon\circ L^\eps)}{\partial x_i} =  \frac{\partial w^\epsilon}{\partial x_i}  - \eps\Big(\frac{\partial k^1_\eps}{\partial x_i}(x)\Big) \frac{\partial w^\epsilon}{\partial y}, \quad i=1,\dots,n,\\
& \frac{\partial (w^\epsilon\circ L^\eps)}{\partial \bar y} =  \frac{\partial w^\epsilon}{\partial y}.
 \end{align*}
In the new system of variables ($x=x$ and $\bar y = y + \eps \,k^1_\eps(x)$ ) the variational formulation of \eqref{1OPI0} is given by
\begin{equation}\label{VFP0 appen}
\begin{split}
&\int_{R_a^\epsilon} \Big\{ \sum_{i=1}^n\frac{\partial (w^\epsilon\circ L^\eps)}{\partial x_i} \frac{\partial \varphi}{\partial x_i} + \frac{\partial(w^\epsilon\circ L^\eps)}{\partial \bar y}\frac{\partial \varphi}{\partial \bar y} + (w^\epsilon\circ L^\eps)\varphi \Big\} dx d\bar y\\
&\quad +\int_{R_a^\epsilon} \sum_{i=1}^n \Big\{\eps \frac{\partial k^1_\eps}{\partial x_i}(x)\Big(\frac{\partial(w^\epsilon \circ L^\eps)}{\partial \bar y} \frac{\partial \varphi}{\partial x_i} + \frac{\partial (w^\epsilon\circ L^\eps)}{\partial x_i}\frac{\partial \varphi}{\partial \bar y}\Big)\Big\}dx d \bar y\\
& \quad+ \int_{R_a^\epsilon} \Big\{\sum_{i=1}^n \eps \Big(\frac{\partial k^1_\eps}{\partial x_i}(x)\Big)^2\frac{\partial(w^\epsilon \circ L^\eps)}{\partial \bar y} \frac{\partial \varphi}{\partial \bar y} \Big\} dx d \bar y\\
& = \int_{R_a^\epsilon} f^\eps_1 \varphi\, dx d \bar y, 
\quad \forall \varphi \in H^1(R_a^\epsilon).
\end{split}
\end{equation}
On the other hand, the weak formulation of \eqref{OPI01 appen} is: find $v^\epsilon \in H^1(R_a^\epsilon)$ such that 
\begin{equation} \label{VFP1 appen}
\int_{R_a^\epsilon} \Big\{ \sum_{i=1}^n \frac{\partial v^\epsilon}{\partial x_i} \frac{\partial \varphi}{\partial x_i} 
+ \frac{\partial v^\epsilon}{\partial \bar y} \frac{\partial \varphi}{\partial \bar y}
+ v^\epsilon \varphi \Big\} \,dx d\bar y = \int_{R_a^\epsilon} f^\eps_1 \varphi \, \,dx d\bar y, 
\quad \forall \varphi \in H^1(R_a^\epsilon).
\end{equation}
Therefore, subtracting \eqref{VFP1 appen} from \eqref{VFP0 appen}, taking $(v^\eps-w^\epsilon\circ L^\eps)$ as a test function and after some computations and simplifications, we obtain

\begin{align*}
\|w^\eps\circ L^\eps-v_\eps\|_{H^1(R_a^\eps)}^2
\leq \eta^1_\eps\|\nabla (w^\eps\circ L^\eps)\|_{L^2(R_a^\eps)}\|\nabla (w^\eps\circ L^\eps-v_\eps)\|_{L^2(R_a^\eps)}.
\end{align*}

This implies 
\begin{align*}
\|w^\eps\circ L^\eps-v_\eps\|_{H^1(R_a^\eps)}
\leq \eta^1_\eps\|w^\eps\circ L^\eps\|_{H^1(R_a^\eps)}\leq \eta^1_\eps\|w^\eps\circ L^\eps-v_\eps\|_{H^1(R_a^\eps)}+ \eta^1_\eps\|v_\eps\|_{H^1(R_a^\eps)},
\end{align*}
and therefore
\begin{align*}
\|w^\eps\circ L^\eps-v_\eps\|_{H^1(R_a^\eps)}
\leq \frac{\eta^1_\eps}{1-\eta^1_\eps}\|v_\eps\|_{H^1(R_a^\eps)}\leq \frac{\eta^1_\eps}{1-\eta^1_\eps}\|f_\eps\|_{L^2(R^\eps_a)},
\end{align*}
where we use that $\|v_\eps\|_{H^1(R_a^\eps)}\leq \|f_1^\eps\|_{L^2(R_a^\eps)}=\|f^\eps\|_{L^2(R^\eps)}$ since $v_\eps$ is the solution of \eqref{OPI01 appen}. This  proves the result. 
\end{proof}

\begin{corollary}
With the same hypothesis of the previous Lemma, we also have that there exists a constant $C>0$ (probably different from the one in the previous Lemma) such that 
$$||w^\eps - v^\eps\circ (L^\eps)^{-1}||_{H^1(R^\epsilon)}^2 \leq C\eta^1(\eps)\|f^\eps\|_{L^2(R_\eps)}^2.$$
\end{corollary}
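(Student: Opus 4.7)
The plan is to transfer the estimate of the previous lemma, which lives on the auxiliary domain $R_a^\eps$, back to the original thin domain $R^\eps$ by post-composing with the inverse diffeomorphism $(L^\eps)^{-1}$. Writing $\psi = w^\eps\circ L^\eps - v^\eps$, which is defined on $R_a^\eps$, we have $\psi\circ (L^\eps)^{-1} = w^\eps - v^\eps\circ (L^\eps)^{-1}$ on $R^\eps$. The previous lemma already controls $\|\psi\|_{H^1(R_a^\eps)}^2$ by $C\eta^1(\eps)\|f^\eps\|_{L^2(R^\eps)}^2$, so the task reduces to showing that pre-composition with $(L^\eps)^{-1}$ distorts $H^1$ norms by at most a multiplicative constant independent of $\eps$.

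For the $L^2$ part this is immediate: since $\det(J(L^\eps)^{-1}) = 1$, the change of variables formula gives the exact identity $\|\psi\circ(L^\eps)^{-1}\|_{L^2(R^\eps)} = \|\psi\|_{L^2(R_a^\eps)}$. For the gradient part I would apply the chain rule pointwise,
\begin{equation*}
\nabla\bigl(\psi\circ (L^\eps)^{-1}\bigr)(x,y) = \bigl(J(L^\eps)^{-1}(x,y)\bigr)^T (\nabla\psi)\bigl((L^\eps)^{-1}(x,y)\bigr),
\end{equation*}
so that $|\nabla(\psi\circ (L^\eps)^{-1})(x,y)|\leq \|J(L^\eps)^{-1}\|_{L^\infty}\,|(\nabla\psi)\circ (L^\eps)^{-1}(x,y)|$. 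Squaring, integrating over $R^\eps$, and once more using that the Jacobian determinant is one yields $\|\nabla(\psi\circ (L^\eps)^{-1})\|_{L^2(R^\eps)}^2\leq C^2\|\nabla\psi\|_{L^2(R_a^\eps)}^2$, where the constant $C$ is exactly the $\eps$-uniform bound provided by \eqref{bound-jacobian}.

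Combining the two estimates gives $\|\psi\circ(L^\eps)^{-1}\|_{H^1(R^\eps)}^2 \leq C'\|\psi\|_{H^1(R_a^\eps)}^2$ with $C'$ independent of $\eps$, and substituting the bound from Lemma~\ref{1 transform} yields the claim with a new constant, still independent of $\eps$. There is no real obstacle here: the only delicate point is to ensure that the constant bounding $\|J(L^\eps)^{-1}\|_{L^\infty}$ really is $\eps$-uniform, which is exactly the content of \eqref{bound-jacobian} (itself a consequence of hypothesis \textbf{(H.1)}).
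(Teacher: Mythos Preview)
Your proof is correct and follows exactly the same approach as the paper, which simply says ``Just perform the change of variables with the diffeomorphism $(L^\eps)^{-1}$ to \eqref{eq-Lemma} and use \eqref{bound-jacobian}.'' You have merely spelled out the details of that change of variables (the chain rule for the gradient, the unit Jacobian determinant for the $L^2$ part, and the $\eps$-uniform bound on $\|J(L^\eps)^{-1}\|_{L^\infty}$).
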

\begin{proof} Just perform the change of variables with the diffeomorphism $(L^\eps)^{-1}$ to \eqref{eq-Lemma} and use \eqref{bound-jacobian}. \end{proof}

\begin{remark}
Notice that from the point of view of the limit behavior of the solutions it is the same to study problem \eqref{1OPI0} defined in the doubly oscillating thin domain as to analyze problem \eqref{OPI01 appen} posed in a thin domain with just one oscillating boundary. It is important to note that this is true because of {\bf(H.1)}. If that assumption is not satisfied, at least for $k_\eps^1$, then the simplification it is not possible. For instance, if we have a domain with oscillations with the same period in both boundaries like this one
$$R^\epsilon = \Big\{ (x, y) \in \R^2 \; | \;  x \in (0,1),  \; -\eps(2 - g(x/\eps)) < y < \epsilon g(x/\eps)  \Big\},$$
where  $g: \R \to \R $ is a smooth  $L-$periodic function.
Observe that in this particular case we have
$$k^1_\eps(x)=2 - g(x/\eps), \quad k^2_\eps(x)=g(x/\eps).$$
Therefore, it follows straightforward that condition \eqref{main} is not satisfied.
Observe that the original problem \eqref{1OPI0} for this particular thin domain is in the framework of the classical periodic homogenization while the converted problem \eqref{OPI01 appen} is posed in a rectangle of height $\eps$ where homogenization theory is not necessary to analyze the behavior of the solutions. Indeed, if $k_\eps^1$ does not satisfy \eqref{main} the solutions of problems \eqref{1OPI0} and \eqref{OPI01 appen} are not comparable in general.
 \end{remark}
 
 Now we define a transformation on the thin domain $R^\eps_a$, which will map $R^\eps_a$ into the fixed cylindrical domain
$Q=\omega \times (0,1)$. This transformation is given by
\begin{equation*}
\begin{array}{rl}
S^\eps: Q &\longrightarrow R_a^\epsilon \\
 (x, y)&\longrightarrow (\bar x,  \bar y) := (x, y\,\eps K_\eps(x)).
 \end{array}
 \end{equation*}
We recall that $K_\eps(x)=  k_\eps^2(x) + k_\eps^1(x)$.  

Using the chain rule and standard computations it is not difficult to see that  there exists $c,C>0$  such that if $V^\eps\in H^1(R_a^\eps)$ and $U^\eps=V^\eps \circ S^\eps\in H^1(Q)$ then  the following estimates hold

\begin{equation}\label{comparison-L2}
c\eps^{-1}\|V^\eps\|^2_{L^2(R^\eps_a)}\leq \|U^\eps\|^2_{L^2(Q)}\leq C\eps^{-1}\|V^\eps\|^2_{L^2(R^\eps_a)},
\end{equation}

\begin{equation}\label{comparison-Dy}
c\eps^{-1}\|\frac{\partial V^\eps}{\partial \bar y}\|^2_{L^2(R^\eps_a)}\leq \frac{1}{\eps^2}\|\frac{\partial U^\eps}{\partial  y}\|^2_{L^2(Q)}\leq C\eps^{-1}\|\frac{\partial V^\eps}{\partial \bar y}\|^2_{L^2(R^\eps_a)},
\end{equation}

\begin{equation}\label{comparison-H1}
c\eps^{-1}\|\nabla V^\eps\|^2_{L^2(R^\eps_a)}\leq \|\nabla U^\eps\|^2_{L^2(Q)}\leq C\eps^{-1}\|\nabla V^\eps\|^2_{L^2(R^\eps_a)}.
\end{equation}

\par\medskip

Now, under this change of variables and defining $u^\eps=v^\eps\circ S^\eps$ where $v^\eps$ satisfies \eqref{OPI01 appen} and $f^\eps_2= f^\eps_1 \circ S^\eps$, where $f_1^\eps$ is defined in \eqref{def-f1eps}, problem \eqref{OPI01 appen} becomes
\begin{equation}\label{transformed-problem}
\left\{
\begin{gathered}
- \frac{1}{K_\eps}\hbox{div}\big(B^\eps(u^\eps)\big)  + u^\epsilon = f^\eps_2
\quad \textrm{ in } Q, \\
B(u^\eps)\cdot \eta  = 0
\quad \textrm{ on } \partial Q,\\
u^\eps=v^\eps \circ S^\eps \textrm{ in } Q,
\end{gathered}
\right. 
\end{equation}
where $\eta$ denotes the unit outward normal vector field to $\partial Q$ and every coordinate of $B(u^\eps)$ is defined as follows
\begin{align*}
& B(u^\eps)_i = K_\eps \frac{ \partial u^\eps}{\partial x_i} - y \frac{\partial K_\eps}{\partial x_i}\frac{ \partial u^\eps}{\partial y}, \quad i=1,\dots, n,\\
& B(u^\eps)_{n}= \sum_{i=1}^n\Big( - y \frac{\partial K_\eps}{\partial x_i}\frac{ \partial u^\eps}{\partial x_i} + \frac{y^2}{ K_\eps}\Big(\frac{\partial K_\eps}{\partial x_i}\Big)^2\frac{ \partial u^\eps}{\partial y}\Big) +\frac{1}{\eps^2 K_\eps}\frac{ \partial u^\eps}{\partial y}.
\end{align*}

Notice that in the new system of coordinates we obtain a domain which is neither thin nor oscillating anymore. In some sense, we have substituted the oscillating thin domain by
oscillating coefficients in the differential operator.

In order to analyze the limit behavior of the solutions of  \eqref{transformed-problem} we establish the relation to the solutions of the following easier problem
\begin{equation}\label{transformed-problem1}
\left\{
\begin{gathered}
- \frac{1}{K_\eps}\sum_{i=1}^n\frac{\partial}{\partial x_i}\Big(K_\eps \frac{\partial w_1^\eps}{\partial x_i}\Big) + \frac{1}{\eps^2 K_\eps} \frac{\partial^2w_1^\eps}{\partial y^2}  + w_1^\epsilon = f^\eps_2
\quad \textrm{ in } Q, \\
\frac{\partial w_1^\eps}{\partial \eta}  = 0  \quad \textrm{ on } \partial Q.
\end{gathered}
\right. 
\end{equation}

Observe that under the assumptions on the functions $k^1_\eps$ and $k^2_\eps$, equation \eqref{transformed-problem1} admits a unique solution $w_1^\eps \in H^1(Q)$, which satisfies the a priori
estimates
\begin{equation} \label{EST1 appen}
\begin{gathered}
\| w_1^\epsilon \|_{L^2(Q)}, \, \, \, \Big\| \frac{\partial w_1^\epsilon}{\partial x_i} \Big\|_{L^2(Q)}, \, \, \, \frac{1}{\epsilon} \Big\| \frac{\partial w_1^\epsilon}{\partial y} \Big\|_{L^2(Q)} 
\le C\|f_2^\eps\|_{L^2(Q)}, \quad i=1\cdots n.
\end{gathered}
\end{equation}

\begin{lemma}\label{2 transform}
Let $u^\eps$ and $w_1^\eps$ be the solution of problems \eqref{transformed-problem} and \eqref{transformed-problem1} respectively. Then, we have
$$\sum_{i=1}^n\Big|\Big|\frac{\partial(u^\eps - w_1^\eps)}{\partial x_i}\Big|\Big|_{L^2(Q)}^2 + \frac{1}{\eps^2}\Big|\Big|\frac{\partial(u^\eps - w_1^\eps)}{\partial y}\Big|\Big|_{L^2(Q)}^2 +||u^\eps - w_1^\eps||_{L^2(Q)}^2 \leq C\eta(\eps) \eps^{-1}\|f_\eps\|_{L^2(R_\eps)}^2, $$
where $\eta(\eps)$ is defined in \eqref{def-eta}.
\end{lemma}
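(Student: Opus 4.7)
The strategy is to test the variational formulations of \eqref{transformed-problem} and \eqref{transformed-problem1} against a common test function (weighted by $K_\eps$ so as to neutralize the $1/K_\eps$ prefactor in front of the divergence and make the Neumann boundary integrals disappear), subtract, and then set $\varphi=u^\eps-w_1^\eps$. The principal (non-cross) parts of the operators in the two problems agree, so they cancel after subtraction, and one is left with
\[
(\text{coercive form in } u^\eps-w_1^\eps)=-\int_Q\bigl(B^\eps(u^\eps)-B^\eps_{\mathrm{diag}}(u^\eps)\bigr)\cdot\nabla(u^\eps-w_1^\eps)\,dx\,dy,
\]
where $B^\eps_{\mathrm{diag}}$ denotes the part of $B^\eps$ with no factor of $\partial_{x_i}K_\eps$. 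By \textbf{(H.2)} the left-hand side is bounded below, up to a positive constant, by $\sum_{i=1}^n\|\partial_{x_i}(u^\eps-w_1^\eps)\|_{L^2(Q)}^2+\eps^{-2}\|\partial_y(u^\eps-w_1^\eps)\|_{L^2(Q)}^2+\|u^\eps-w_1^\eps\|_{L^2(Q)}^2$, which is precisely the quantity to be estimated.

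The right-hand side is the sum of the three families of cross integrals
\[
\int_Q y\,\partial_{x_i}K_\eps\,\partial_y u^\eps\,\partial_{x_i}(u^\eps-w_1^\eps),\quad \int_Q y\,\partial_{x_i}K_\eps\,\partial_{x_i}u^\eps\,\partial_y(u^\eps-w_1^\eps),\quad \int_Q \tfrac{y^2}{K_\eps}(\partial_{x_i}K_\eps)^2\partial_y u^\eps\,\partial_y(u^\eps-w_1^\eps).
\]
To control them I invoke three ingredients: (i) the pointwise estimate $|\eps\,\partial_{x_i}K_\eps|\le\eta(\eps)$ coming from \textbf{(H.1)} and \eqref{def-eta}; (ii) the a priori bound $\|\nabla_x u^\eps\|_{L^2(Q)}+\eps^{-1}\|\partial_y u^\eps\|_{L^2(Q)}\le C\|f_2^\eps\|_{L^2(Q)}$, obtained by transferring \eqref{a priori1 appen} to $u^\eps=v^\eps\circ S^\eps$ through the rescaling inequalities \eqref{comparison-L2}--\eqref{comparison-H1}; and (iii) the estimate $\|f_2^\eps\|_{L^2(Q)}^2\le C\eps^{-1}\|f^\eps\|_{L^2(R^\eps)}^2$, which follows the same way from $f_2^\eps=f_1^\eps\circ S^\eps$.

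With these tools, each cross integral is handled via Young's inequality after pairing the factor $\partial_{x_i}K_\eps$ (whose size is $\eta(\eps)/\eps$) with a $\partial_y$-derivative so as to reconstitute an $\eps^{-1}\partial_y$. Each term then splits as $\delta\cdot(\text{a coercive component of }u^\eps-w_1^\eps)+C\delta^{-1}\eta(\eps)^2\|f_2^\eps\|_{L^2(Q)}^2$. Choosing $\delta$ small enough to absorb the first piece on the left, invoking ingredient (iii), and using $\eta(\eps)^2\le\eta(\eps)$ for $\eps$ small yields the bound by $C\eta(\eps)\eps^{-1}\|f^\eps\|_{L^2(R^\eps)}^2$. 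The main obstacle is precisely this bookkeeping of $\eps$ and $\eta(\eps)$ powers: one has to make sure that every cross term admits a splitting of the type above, and in particular the third family (quadratic in $\partial_{x_i}K_\eps$) requires an extra Young step that spends the additional $\eta(\eps)$ factor on the absorbable piece so as to prevent the appearance of an uncontrolled $\eps^{-2}$.
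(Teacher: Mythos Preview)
Your proposal is correct and follows essentially the same approach as the paper: subtract the two weak formulations, test with $u^\eps-w_1^\eps$, observe that the principal parts cancel so that the left-hand side is the coercive quantity \eqref{21 transform}, and then estimate the three cross terms on the right using $|\eps\,\partial_{x_i}K_\eps|\le\eta(\eps)$ together with the a priori bounds for $u^\eps$ transferred from $v^\eps$ via \eqref{comparison-L2}--\eqref{comparison-H1}. In fact your write-up is more explicit than the paper's own proof, which simply records the identity \eqref{21 transform} and refers to ``standard computations, similar to the ones in the proof of Lemma~\ref{1 transform}''.
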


\begin{proof}
Subtracting the weak formulation of \eqref{transformed-problem1} from the weak formulation of \eqref{transformed-problem} and choosing $w_1^\eps-u^\eps $ as test function we get
\begin{equation} \label{21 transform}
\begin{split}
\int_{Q}&\Big\{ \sum_{i=1}^n K_\eps\Big(\frac{\partial (u^\epsilon - w_1^\eps)}{\partial x_i}\Big)^2 + \frac{1}{\eps^2 K_\eps}\Big(\frac{\partial (u^\epsilon - w_1^\eps)}{\partial y}\Big)^2 + 
\big(u^\epsilon - w_1^\eps\big)^2\Big\} dx dy\\
&= \int_{Q} \Big\{ \sum_{i=1}^n y\frac{\partial u^\eps}{\partial y}\frac{\partial K_\eps}{\partial x_i}\frac{\partial (w_1^\eps-u^\eps )}{\partial x_i} + \sum_{i=1}^n y\frac{\partial K_\eps}{\partial x_i}\frac{\partial u^\eps}{\partial x_i}\frac{\partial (w_1^\eps-u^\eps )}{\partial y} - \sum_{i=1}^n \frac{y^2}{K_\eps}\Big(\frac{\partial K_\eps}{\partial x_i}\Big)^2\frac{\partial u^\eps}{\partial y}\frac{\partial (w_1^\eps-u^\eps )}{\partial y}\Big\}dxdy.
\end{split}
\end{equation}
Taking into account that  $\eps |\frac{\partial K_\eps}{\partial x_i}|\leq \eta(\eps)$, convergence \eqref{main},  estimates \eqref{comparison-L2}, \eqref{comparison-Dy}, \eqref{comparison-H1} applied to $u^\eps$ and $v^\eps$ and the a priori estimates of $v^\eps$ (see \eqref{a priori1 appen}), $u^\eps$ and $w_1^\eps$ (see  \eqref{EST1 appen}) and following standard computations, similar to the ones in the proof of Lemma \ref{1 transform}, 
we obtain the result.
\end{proof}

Finally, we compare the behavior of the solution of \eqref{transformed-problem1} to the solutions of the following problem posed in $\omega \subset \R^n$
\begin{equation}\label{transformed-problem3}
\left\{
\begin{gathered}
- \frac{1}{K_\eps}\sum_{i=1}^n\frac{\partial}{\partial x_i}\Big(K_\eps \frac{\partial u_1^\eps}{\partial x_i}\Big)+ u_1^\epsilon = f^\eps_3
\quad \textrm{ in } \omega, \\
\frac{\partial u_1^\eps}{\partial \eta}  = 0  \quad \textrm{ on } \partial \omega,
\end{gathered}
\right. 
\end{equation}
where $f^\eps_3(x)=\displaystyle\int_{0}^{1}f^\eps_2(x,y) dy$ for a.e. $x \in \omega$, which is a function depending only on the $x$ variable. 


Then, considering $u_1^\eps(x)$ as a function defined in $Q$ (extending it in a constant way in the $y$ direction) we prove the following lemma.
\begin{lemma}\label{3 transform}
Let $u^\eps_1$ and $w_1^\eps$ be the solution of problems \eqref{transformed-problem3} and \eqref{transformed-problem1} respectively. Then, we have
$$\sum_{i=1}^N\Big|\Big|\frac{\partial(u^\eps_1 - w_1^\eps)}{\partial x_i}\Big|\Big|_{L^2(Q)}^2 + \frac{1}{\eps^2}\Big|\Big|\frac{\partial w_1^\eps}{\partial y}\Big|\Big|_{L^2(Q)}^2 +||u^\eps_1 - w_1^\eps||^2_{L^2(Q)}\leq C\|f^\eps\|_{L^2(R^\eps)}^2.$$
\end{lemma}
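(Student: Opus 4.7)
The key insight I would establish first is that $u_1^\eps$ coincides with the vertical average of $w_1^\eps$, i.e. $u_1^\eps(x) = \bar w_1^\eps(x) := \int_0^1 w_1^\eps(x,y)\,dy$. To see this, I would test the weak formulation of \eqref{transformed-problem1} against functions $\varphi \in H^1(\omega)\subset H^1(Q)$ that do not depend on $y$: the $\partial_y$ coupling then drops, $K_\eps$ depends only on $x$, and Fubini in the $y$-variable reduces the identity to the weak formulation of \eqref{transformed-problem3} with $\bar w_1^\eps$ in place of $u_1^\eps$ and right-hand side $\int_0^1 f_2^\eps(\cdot,y)\,dy = f_3^\eps$. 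Uniqueness for \eqref{transformed-problem3} then forces $\bar w_1^\eps = u_1^\eps$.

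Once this is in hand, $\zeta := u_1^\eps - w_1^\eps$ has zero mean in $y$ for a.e.\ $x\in\omega$, and since $u_1^\eps$ is independent of $y$, $\partial_y \zeta = -\partial_y w_1^\eps$. Applying the one-dimensional Poincar\'e--Wirtinger inequality on $(0,1)$ fiberwise yields
$$\int_0^1 |\zeta(x,y)|^2\,dy \le C\int_0^1 \Big|\frac{\partial w_1^\eps}{\partial y}(x,y)\Big|^2\,dy,$$
and integration in $x$ gives $\|u_1^\eps - w_1^\eps\|_{L^2(Q)}^2 \le C\|\partial_y w_1^\eps\|_{L^2(Q)}^2$. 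Combined with the a priori bound \eqref{EST1 appen}, this controls the $L^2$ piece by $C\eps^2\|f_2^\eps\|_{L^2(Q)}^2$, which after invoking the scaling \eqref{comparison-L2} and the isometry $\|f_1^\eps\|_{L^2(R_a^\eps)} = \|f^\eps\|_{L^2(R^\eps)}$ converts to the desired bound in terms of $\|f^\eps\|_{L^2(R^\eps)}^2$.

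For the remaining two terms, the piece $\tfrac{1}{\eps^2}\|\partial_y w_1^\eps\|_{L^2(Q)}^2$ is given directly by \eqref{EST1 appen}. For the tangential derivatives I would use the triangle inequality, bounding $\|\partial_{x_i} w_1^\eps\|_{L^2(Q)}$ by \eqref{EST1 appen}, while $\|\partial_{x_i} u_1^\eps\|_{L^2(Q)} = \|\partial_{x_i} u_1^\eps\|_{L^2(\omega)}$ is bounded by the standard Lax--Milgram estimate for \eqref{transformed-problem3} together with the Cauchy--Schwarz inequality $\|f_3^\eps\|_{L^2(\omega)} \le \|f_2^\eps\|_{L^2(Q)}$. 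The rescaling \eqref{comparison-L2} then produces the stated right-hand side.

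The one non-routine step is the identification $u_1^\eps = \bar w_1^\eps$: it is precisely this observation that allows the zero-mean structure of $\zeta$ to be exploited via Poincar\'e--Wirtinger, trading the $L^2$ norm of $\zeta$ for the small quantity $\|\partial_y w_1^\eps\|_{L^2(Q)}$. Everything else follows from the a priori energy estimates and changes of variables already recorded in the paper.
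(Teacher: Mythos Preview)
Your identification $u_1^\eps=\bar w_1^\eps$ is correct and elegant, and the Poincar\'e--Wirtinger step indeed controls $\|u_1^\eps-w_1^\eps\|_{L^2(Q)}^2$ by $C\eps^2\|f_2^\eps\|_{L^2(Q)}^2\leq C\eps\|f^\eps\|_{L^2(R^\eps)}^2$, which is even better than required. The gap is in the other two terms. Invoking \eqref{EST1 appen} directly for $\tfrac{1}{\eps^2}\|\partial_y w_1^\eps\|_{L^2(Q)}^2$ only yields $C\|f_2^\eps\|_{L^2(Q)}^2$, and the triangle inequality for the tangential gradients gives at best $\|\partial_{x_i}u_1^\eps\|_{L^2}^2+\|\partial_{x_i}w_1^\eps\|_{L^2}^2\leq C\|f_2^\eps\|_{L^2(Q)}^2$. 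By \eqref{comparison-L2} one has $\|f_2^\eps\|_{L^2(Q)}^2\sim\eps^{-1}\|f^\eps\|_{L^2(R^\eps)}^2$, so your ``rescaling then produces the stated right-hand side'' is off by a factor $\eps^{-1}$: you obtain $C\eps^{-1}\|f^\eps\|_{L^2(R^\eps)}^2$, not $C\|f^\eps\|_{L^2(R^\eps)}^2$. This is not harmless for the application, since in the proof of Proposition~\ref{main-reduction} the lemma is multiplied by $\eps$ and must produce a term that vanishes as $\eps\to 0$.

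The paper recovers this missing factor through an energy identity rather than separate a~priori bounds: testing both weak formulations with $w_1^\eps-u_1^\eps$ and subtracting gives \emph{all three} terms on the left simultaneously, with right-hand side $\int_Q K_\eps(f_2^\eps-f_3^\eps)(w_1^\eps-u_1^\eps)$. Because $f_2^\eps-f_3^\eps$ has zero $y$-mean, the pairing with any $y$-independent function vanishes, so one may replace $w_1^\eps$ by $w_1^\eps(\cdot,0)$ (or, equivalently with your observation, by $\bar w_1^\eps=u_1^\eps$) at no cost and bound the right-hand side by $C\|f_2^\eps\|_{L^2(Q)}\|\partial_y w_1^\eps\|_{L^2(Q)}\leq C\eps\|f_2^\eps\|_{L^2(Q)}^2\leq C\|f^\eps\|_{L^2(R^\eps)}^2$. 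Your averaging insight dovetails nicely with this, but it cannot replace the energy identity for the gradient pieces.
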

\begin{proof}
Taking $w_1^\eps - u_1^\eps$ as test function in the variational formulation of 
\eqref{transformed-problem1} and \eqref{transformed-problem3}
and subtracting both weak formulations we obtain
\begin{equation} \label{31 transform}
\begin{split}
\int_{Q}&\Big\{ \sum_{i=1}^n K_\eps\Big(\frac{\partial (u^\epsilon_1 - w_1^\eps)}{\partial x_i}\Big)^2 + \frac{1}{\eps^2 K_\eps}\Big(\frac{\partial w_1^\eps}{\partial y}\Big)^2 + 
K_\eps \big(u_1^\epsilon - w_1^\eps\big)^2\Big\} dx dy\\
&= \int_{Q} K_\eps (f_2^\eps - f_3^\eps) u_1^\eps dxdy - \int_{Q}K_\eps (f_2^\eps - f_3^\eps) w_1^\eps dxdy.
\end{split}
\end{equation}
Now we analyze the two terms in the right hand-side. First, taking into account the definition of $f_3^\eps$ we have that for any function $\varphi$ defined in $\omega$, that is, $\varphi=\varphi(x)$ 
$$\int_{Q} K_\eps (f_2^\eps - f_3^\eps) \varphi dxdy=  \int_{\omega} \varphi K_\eps \Big(\int_0^1 f_2^\eps dy - f_3^\eps \Big)dx=0.$$
In particular  $\displaystyle \int_{Q} K_\eps (f_2^\eps - f_3^\eps) u_1^\eps dxdy=0$
and $\displaystyle \int_{Q} K_\eps (f_2^\eps - f_3^\eps) w_1^\eps(x,0)dxdy=0$. Hence, using Holder inequality and \eqref{EST1 appen} we get
\begin{align*}
&\Big|\int_{Q} K_\eps (f_2^\eps - f_3^\eps) w_1^\eps dxdy \Big| = \Big|\int_{Q} K_\eps (f_2^\eps - f_3^\eps) (w_1^\eps (x,y)- w_1^\eps(x,0) ) dxdy\Big|\\
&\qquad \leq \|K_\eps (f_2^\eps - f_3^\eps )\|_{L^2(Q)} \|w_1^\eps - w_1^\eps(x,0) \|_{L^2(Q)}\leq C\|f_2^\eps\|_{L^2(Q)} \|w_1^\eps - w_1^\eps(x,0) \|_{L^2(Q)} \\
&\qquad = C\|f_2^\eps\|_{L^2(Q)} \Big(\int_{Q} \Big| \int_0^y \frac{\partial w_1^\eps}{\partial s}(x,s) ds \Big|^2\Big)^{1/2} \leq C \|f_2^\eps\|_{L^2(Q)}\Big\|\frac{\partial w_1^\eps}{\partial y}  \Big\|_{L^2(Q)}\leq \|f^\eps\|_{L^2(R^\eps)}^2. 
\end{align*}
Therefore, from \eqref{31 transform} the lemma is proved.
\end{proof}

After this lemmas we can provide a proof of the main result of this section.

\par\bigskip 

\par\noindent {\sl Proof of Proposition \ref{main-reduction}.}  Notice first that $u^\eps_1$, the solution of \eqref{transformed-problem3} coincides with $\hat w$, the solution of \eqref{transformed-problem-hat} and the function appearing in the statement of the proposition.

Hence,
$$\|w^\eps-\hat w^\eps\|^2_{H^1(R^\eps)}=\|w^\eps- u^1_\eps\|^2_{H^1(R^\eps)}\leq C\|w^\eps\circ L^\eps - u^1_\eps\circ L^\eps \|^2_{H^1(R^\eps_a)}=C\|w^\eps\circ L^\eps - u^1_\eps \|^2_{H^1(R^\eps_a)}, $$
where we use \eqref{bound-jacobian} and the fact that $u^1_\eps$ does not depend on the $y$ variable. 

But,
$$\|w^\eps\circ L^\eps - u^1_\eps \|^2_{H^1(R^\eps_a)}\leq 2\|w^\eps\circ L^\eps - v^\eps\|^2_{H^1(R^\eps_a)}+2\|v^\eps-u^1_\eps \|^2_{H^1(R^\eps_a)}\leq C\eta(\eps)\|f^\eps\|^2_{L^2(R^\eps)}+2\|v^\eps-u^1_\eps \|^2_{H^1(R^\eps_a)},$$
where we have used the inequalith $(a+b)^2\leq 2a^2+2b^2$ and Lemma \ref{1 transform}. 

Now, using that $u^\eps=v^\eps\circ S^\eps$ and \eqref{comparison-L2}, \eqref{comparison-Dy}, \eqref{comparison-H1} and that $u^1_\eps$ is independent of $y$ and therefore $u^1_\eps\circ S^\eps=u^1_\eps$, we get 
$$\|v^\eps-u^1_\eps \|^2_{H^1(R^\eps_a)}\leq C \big(\eps\|u^\eps -u^1_\eps \|^2_{H^1(Q)}+\frac{1}{\eps}\|\frac{\partial u^\eps}{\partial y}\|^2_{L^2(Q)}\big),$$
and applying now the triangular inequality, 
$$\leq C\big( \eps \|u^\eps-w^1_\eps\|^2_{H^1(Q)}+\frac{1}{\eps}\|\frac{\partial (u^\eps-w^1_\eps)}{\partial y}\|^2_{L^2(Q)}+ \eps \|w^1_\eps-u^1_\eps\|^2_{H^1(Q)}+\frac{1}{\eps}\|\frac{\partial w^1_\eps}{\partial y}\|^2_{L^2(Q)}\big),$$
and with Lemma \ref{2 transform} and Lemma \ref{3 transform} we get 
$$\leq C(\eta(\eps)\|f^\eps\|_{L^2(R^\eps)}^2+\eps\|f^\eps\|_{L^2(R^\eps)}^2)\leq C\eta(\eps)\|f^\eps\|_{L^2(R^\eps)}^2.$$

Putting all this inequalities together we prove the result. 


\section{Limit problem for the elliptic equation}

In view of Proposition \ref{main-reduction}, the homogenized limit problem of \eqref{1OPI0} will be obtained passing to the limit in the reduced problem \eqref{transformed-problem-hat}.  

We will obtain explicitly the homogenized limit problem of \eqref{transformed-problem-hat} for several interesting cases, in particular when the oscillating boundaries are given by
$$k^1_\eps(x)=h(x/\eps^\alpha), \quad k^2_\eps(x)=g(x/\eps^\beta),$$
where $0<\alpha, \beta<1$ and the functions $g,h \,: \R^n \to \R $ are $C^1$ periodic functions verifying 
\begin{align}\label{cota g+h}
&0\leq h_0\leq h(\cdot)\leq h_1,\\
&0<g_0 \leq g(\cdot)\leq g_1.
\end{align}

Note that these particular functions $k^1_\eps$ and $k^2_\eps$ satisfy hypothesis {\bf (H)} because of the fact that $0<\alpha,\beta<1$. Moreover, in this case, problem \ref{transformed-problem-hat} can be written as

 \begin{equation}\label{transformed-problem-hat2}
\left\{
\begin{gathered}
- \frac{1}{G_\eps}\sum_{i=1}^n\frac{\partial}{\partial x_i}\Big(G_\eps \frac{\partial \hat w^\eps}{\partial x_i}\Big)+ \hat w^\eps = \hat f^\eps
\quad \textrm{ in } \omega, \\
\frac{\partial \hat w^\eps}{\partial \eta}  = 0  \quad \textrm{ on } \partial \omega, 
\end{gathered}
\right. 
\end{equation}
where $G_\eps$ plays the role of $K_\eps$ and it is given by $G_\eps(x)= g(x/\eps^\beta) + h(x/\eps^\alpha)$.

From now on we will assume that $\hat f^\eps$ satisfies the following convergence
\begin{equation}\label{limit f1}
G_\eps(\cdot) \hat f^\eps(\cdot) =\frac{1}{\eps}\int_{-\eps \,h(\cdot/\eps^\alpha)}^{\epsilon \, g(\cdot/\eps^\beta)} f^\eps (\cdot, y) dy \weto f_0(\cdot) \quad \hbox{w-}L^2(\omega),
\end{equation}
 for certain $f_0\in L^2(\omega)$. 
%
%
%

We will study first the two-dimensional case adapting a classical simple argument in the context of periodic homogenization of the one-dimensional problems with oscillating coefficients, see \cite{BenLioPap,CiorDonato}. 
Secondly, we obtain the limit problem for the general and more complicated situation where the domain is $n-$dimensional, $n>2.$

\begin{remark}
Notice that considering this kind of periodic functions in problems with rough boundaries is very common, see \cite{C1,MP2, ArrVi2017} and the references therein. However, it is important to highlight that
we contemplate the possibility  of situations beyond the classical periodic setting in homogenization. For example, we consider cases where both boundaries oscillate with different rationally independent periods, which amounts to study a quasi-periodic problem. 
\end{remark}
\subsection{Two-dimensional case}
In this subsection we consider a two-dimensional thin domain $R^\epsilon$ which is given as the region between two oscillatory functions, that is, 
\begin{equation*}
R^\epsilon = \Big\{ (x, y) \in \R^2 \; | \;  x \in (0,1),  \; -\eps \,h(x/\eps^\alpha) < y < \epsilon \, g(x/\eps^\beta) \Big\},
 \end{equation*}
 where $0<\alpha, \beta<1$ and the functions $g,h \,: \R \to \R $ are $C^1$ periodic functions with period $L_1$ and $L_2$ respectively.
 Then, we study the behavior of the solutions of the Neumann problem \eqref{1OPI0}.

 In this case, problem \ref{transformed-problem-hat2} can be written as the following one dimensional problem 
\begin{equation}\label{transformed-problem21}
\left\{
\begin{gathered}
- \frac{1}{G_\eps} \frac{\partial}{\partial x}\Big(G_\eps \frac{\partial \hat w^\eps}{\partial x}\Big)  + \hat w^\eps = \hat f^\eps
\quad \textrm{ in } (0,1), \\
  (\hat w^\eps)'(0) =  (\hat w^\eps)'(1) =0, 
\end{gathered}
\right. 
\end{equation}
where $G_\eps(x)= g(x/\eps^\beta) + h(x/\eps^\alpha)$.
We would like to point that \eqref{transformed-problem21} presents the particularity of having not necessarily periodic coefficients. For instance, if $\alpha\ne \beta$ the problem is not periodic. Moreover, if $\alpha = \beta$ and the period of $g$ and $h$ are rationally independent periods then we have the situation of a quasi-periodic coefficients. 

The weak formulation of \eqref{transformed-problem21} is given by 
\begin{equation}\label{weak-version-final-problem1}
\int_{0}^1\Big\{ G_\eps \frac{\partial {\hat w}^\eps}{\partial x}\frac{\partial \phi^\eps}{\partial x} + G_\eps {\hat w}^\eps\phi \big\} \,  dx =\int_0^1 G_\eps \hat f^\eps \phi \, dx, \quad \hbox{ for all } \phi\in H^1(0,1).
\end{equation}

We start by establishing a priori estimates of ${\hat w}^\eps$. Considering ${\hat w}^\eps$ as a test function in \eqref{weak-version-final-problem1}, we easily get
$$||{\hat w}^\eps||_{H^1(0,1)}\leq C.$$
Thus, by weak compactness there exists $u_0 \in H^1(0,1)$ such that, up to subsequences
\begin{equation}\label{weak u1 appen}
 {\hat w}^\epsilon \weto u_0 \quad w-H^1(0, 1).
 \end{equation}

As in the simplest cases for the periodic homogenization, see for example \cite{BenLioPap, CiorDonato}, the key question now is: How is the limit of the product $G_\eps\frac{\partial {\hat w}^\eps}{\partial x}$? 
To solve this, we first obtain the weak limit of the functions $G_\eps$ and ${\displaystyle \frac{1}{G_\eps}.}$
 
 On one hand, since $G_\eps(x)=g(x/\eps^\beta) + h(x/\eps^\alpha)$ is the sum of two periodic functions it is obvious from the Average Convergence for Periodic Functions Theorem (see, e.g., \cite[p. xvi]{CioPau}) that $G_\eps(x)$ converges in a weak sense to the sum of the corresponding mean values, that is, 
 \begin{equation}\label{weak conv appen}
 G_\eps {\weto}  \frac{1}{L_1}\int_{0}^{L_1}g(y)\, dy + \frac{1}{L_2}\int_{0}^{L_2}h(z)\, dz \equiv {\cal M}(g)+{\cal M}(h) \quad w-L^2(0,1).
 \end{equation}
 
On the other hand, to prove the convergence ${\displaystyle \frac{1}{G_\eps}}$ we state the follwing lemma. 
\begin{lemma}
Let $G_\eps(x)=g(x/\eps^\beta) + h(x/\eps^\alpha)$. Then the following convergence holds 
$$\frac{1}{G_\eps}=\frac{1}{g(\frac{\cdot}{\eps^\beta})+h(\frac{\cdot}{\eps^\beta})}\weto  \frac{1}{p_0},$$
where $p_0$ is defined as follows
\begin{eqnarray*}
\frac{1}{p_0}
= 
 \left\{ 
 \renewcommand{\arraystretch}{2.5}
\begin{array}{ll}
{\displaystyle \lim_{T\to \infty} \frac{1}{T}\int_{0}^{T} \frac{1}{g(y) + h(y)}\, dy, \quad \hbox{if } \alpha=\beta,}  \\
 {\displaystyle\frac{1}{L_1L_2}\int_{0}^{L_1}\int_{0}^{L_2}\frac{1}{g(y) + h(z)}\, dz dy, \quad \hbox{if } \alpha \neq \beta.}
\end{array}
\right.
\end{eqnarray*}
\end{lemma}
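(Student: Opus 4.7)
My plan is to identify the weak-$L^2(0,1)$ limit of $\{1/G_\eps\}$, which is uniformly bounded in $L^\infty$ since $g_0\le G_\eps \le g_1+h_1$; boundedness yields weak precompactness, so it suffices to compute $\int (1/G_\eps)\phi\,dx$ on a dense family of test functions in $L^2(0,1)$. I will view $1/G_\eps(x)=F(x/\eps^\beta,x/\eps^\alpha)$ as a two-scale profile, where
$$F(s,t):=\frac{1}{g(s)+h(t)}$$
is continuous and bi-periodic on $\R^2/(L_1\mathbb{Z}\times L_2\mathbb{Z})$. By Stone-Weierstrass on the torus, $F$ is uniformly approximated by trigonometric polynomials $P_N(s,t)=\sum_{|m|,|n|\le N}c_{mn}\,e^{2\pi i(ms/L_1+nt/L_2)}$ with $\|F-P_N\|_\infty\eto 0$; the constant term is $c_{00}=\frac{1}{L_1L_2}\int_0^{L_1}\!\!\int_0^{L_2} F\,ds\,dt$. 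This reduces the problem to analyzing each exponential mode $e^{i\lambda_{mn}(\eps)x}$ with $\lambda_{mn}(\eps):=2\pi\bigl(m/(L_1\eps^\beta)+n/(L_2\eps^\alpha)\bigr)$.

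\emph{Case $\alpha\neq\beta$.} Assuming WLOG $\alpha<\beta$, one has $\eps^{-\beta}\gg\eps^{-\alpha}$; for every $(m,n)\neq(0,0)$ the frequency $\lambda_{mn}(\eps)$ diverges (the $\eps^{-\beta}$ term dominates if $m\neq 0$, and $\lambda_{0,n}=2\pi n/(L_2\eps^\alpha)\to\pm\infty$ otherwise), so by the Riemann-Lebesgue lemma $e^{i\lambda_{mn}(\eps)\,\cdot}\weto 0$ in $L^2(0,1)$. Only the zero mode persists, yielding $P_N(\cdot/\eps^\beta,\cdot/\eps^\alpha)\weto c_{00}^{(N)}$ for each fixed $N$. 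Combining this with the uniform bound $|1/G_\eps-P_N(\cdot/\eps^\beta,\cdot/\eps^\alpha)|\le\|F-P_N\|_\infty$ and letting $N\to\infty$ gives $1/G_\eps\weto c_{00}=1/p_0$.

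\emph{Case $\alpha=\beta$.} Here the problem collapses to one scale, $1/G_\eps(x)=\psi(x/\eps^\alpha)$ with $\psi(y):=1/(g(y)+h(y))$. When the periods $L_1,L_2$ are commensurable, $\psi$ is periodic and the classical Average Convergence Theorem for Periodic Functions (already used in the paper) gives $\psi(\cdot/\eps^\alpha)\weto \mathcal{M}(\psi)$. In the rationally independent case $\psi$ is Bohr almost-periodic (it is a continuous function of two almost-periodic functions whose denominator is bounded away from zero), so Bohr's mean value theorem guarantees existence of $\mathcal{M}(\psi)=\lim_{T\to\infty}T^{-1}\int_0^T\psi$ together with uniform convergence $T^{-1}\int_a^{a+T}\psi\to\mathcal{M}(\psi)$ in $a$. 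Testing against $\chi_{[a,b]}$ via the substitution $y=x/\eps^\alpha$, setting $T_\eps=(b-a)/\eps^\alpha\to\infty$,
$$\int_a^b \psi(x/\eps^\alpha)\,dx=\eps^\alpha\!\int_{a/\eps^\alpha}^{b/\eps^\alpha}\!\psi(y)\,dy=(b-a)\cdot\frac{1}{T_\eps}\int_{a/\eps^\alpha}^{a/\eps^\alpha+T_\eps}\!\psi(y)\,dy\ \longrightarrow\ (b-a)\,\mathcal{M}(\psi),$$
and density of characteristic functions of intervals in $L^2(0,1)$ upgrades this to weak-$L^2$ convergence with the claimed limit $\mathcal{M}(\psi)=1/p_0$.

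The hardest step is the $\alpha=\beta$ subcase with $L_1/L_2\notin\mathbb{Q}$: the classical periodic-homogenization toolbox does not apply because $\psi$ is not periodic, and one must invoke the Bohr-Bochner theory of almost-periodic functions (or, equivalently, Weyl equidistribution of the orbit $t\mapsto(t\bmod L_1,\,t\bmod L_2)$ on the $2$-torus), both to obtain existence of the mean $\mathcal{M}(\psi)$ and to secure the uniformity in $a$ used in the change-of-variable argument above.
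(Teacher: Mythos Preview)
Your argument is correct. For the case $\alpha=\beta$ your treatment coincides with the paper's: both reduce to testing against characteristic functions $\chi_{[a,b]}$ after a change of variable, and both invoke the Bohr mean-value theorem for almost-periodic functions to handle the rationally independent subcase. The only cosmetic difference is that the paper splits $\int_a^b=\int_0^b-\int_0^a$ (so only the existence of the mean from $0$ is needed), whereas you appeal directly to the uniformity in the starting point of Bohr's theorem; both are standard.

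For $\alpha\neq\beta$ your route is genuinely more explicit than the paper's. The paper simply declares this case ``well-known'' and cites the reiterated-homogenization literature (Bensoussan--Lions--Papanicolaou), without writing a proof. Your argument---Stone--Weierstrass approximation of the bi-periodic profile $F(s,t)=1/(g(s)+h(t))$ by trigonometric polynomials on the torus, followed by Riemann--Lebesgue for each nonzero mode since the frequencies $\lambda_{mn}(\eps)$ diverge when $\alpha\neq\beta$---is self-contained and elementary, and makes transparent why the limit is the double average over the two cells. What the citation buys is brevity and a ready-made framework for higher-dimensional analogues; what your approach buys is that the reader sees exactly where the separation of scales $\eps^\alpha\neq\eps^\beta$ enters (in forcing $|\lambda_{mn}(\eps)|\to\infty$ for every nonzero mode).
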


\begin{proof}

We distinguish two different cases:

\par\noindent {i) Same order of oscillation ($\alpha = \beta$). }

 \begin{equation}\label{weak conv1 appen}
\frac{1}{g\Big(\frac{x}{\eps^\alpha}\Big)+ h\Big(\frac{x}{\eps^\alpha}\Big)} {\weto} \frac{1}{p_0} = \lim_{T \to \infty}\frac{1}{T}\int_{0}^{T}\frac{1}{g(y)+h(y)}\,dy  \quad w-L^2(0,1).
\end{equation}

We treat here the case where the function $G_\eps$ presents only one small scale, that is,
$$\frac{1}{G_\eps(x)}= \frac{1}{g(x/\eps^\alpha) + h(x/\eps^\alpha)}, \quad \hbox{for } x\in (0,1) \; \hbox{ and } \alpha \in (0,1).$$

Note that if the periods $L_1$ and $L_2$ are rationally dependent, there exist $p,q \in \N$ such that $pL_1 = q L_2$, then we immediately have from the  Average Convergence for Periodic Functions (see, e.g., \cite[p. xvi]{CioPau}) the weak convergence of $ \frac{1}{G_\eps}$
$$\frac{1}{G_\eps} {\weto} \frac{1}{p L_1} \int_0^{p L_1} \frac{1}{g(y) + h(y)}\,dy \quad w-L^2(0,1).$$

However, if $L_1$ and $L_2$ are rationally independent the usual periodicity hypothesis is replaced by a more general behavior: almost periodicity, see for example \cite{Be,Bohr}.
Indeed, in this case the function $\frac{1}{G(y)}=\frac{1}{g(y)+ h(y)}$ is not periodic 
but we show that it is almost periodic which allows us to obtain the weak limit. 

Since $G(y)= g(y) + h(y)$ is the sum of two periodic functions with different period we can ensure that $G$ is an almost periodic function. Then, from the definition of almost 
periodicity, for every $\eps>0$  there exists $T_0(\eps)$ such that every interval of length $T_0(\eps)$ contains a number $\tau$ with the following property:
$$|G(y+\tau) - G(y)|\leq m^2\eps, \; \hbox{ for each }y \in \R,$$
where $m$ is a constant such that $0<m\leq g(y) +h(y), \; \forall y \in \R$.

So we have,
\begin{align*}
\Big|\frac{1}{G(y+\tau)} - \frac{1}{G(y)}\Big|=\Big|\frac{G(y)-G(y+\tau) }{G(y+\tau)G(y)}\Big|\leq \frac{m^2 \eps}{m^2}=\eps,
\end{align*}
and hence  $\frac{1}{G(y)}$ is almost periodic.

Therefore, note that ${\displaystyle \lim_{T \to \infty}\frac{1}{T}\int_{0}^{T}\frac{1}{g(y)+h(y)}\,dy }$ is well defined since it is the mean value of the almost periodic function $\frac{1}{G(y)}$.

Now, we are in conditions to prove the desired weak convergence \eqref{weak conv1 appen}.

To obtain \eqref{weak conv1 appen}, since $||\frac{1}{G(y)}||_{L^\infty(0,1)}\leq \frac{1}{g_0 + h_0}$  and the set of all
the step functions is dense in $L^p(0,1), 1\leq p <\infty,$ it is enough to prove
 \begin{equation}\label{weak conv2 appen}
\lim_{\eps \to 0}\int_{a}^b\frac{1}{g\Big(\frac{x}{\eps^\alpha}\Big)+ h\Big(\frac{x}{\eps^\alpha}\Big)} \eto (b-a)  \lim_{T \to \infty}\frac{1}{T}\int_{0}^{T}\frac{1}{g(y)+h(y)}\,dy, \, \hbox{ for any } (a,b) \subset (0,1).
\end{equation}
We can write
\begin{equation}\label{decomp appen}
\int_{a}^b\frac{1}{g\Big(\frac{x}{\eps^\alpha}\Big)+ h\Big(\frac{x}{\eps^\alpha}\Big)}\,dx = \int_{0}^{b}\frac{1}{g\Big(\frac{x}{\eps^\alpha}\Big)+ h\Big(\frac{x}{\eps^\alpha}\Big)}\,dx - 
\int_{0}^{a}\frac{1}{g\Big(\frac{x}{\eps^\alpha}\Big)+ h\Big(\frac{x}{\eps^\alpha}\Big)}\,dx.
\end{equation}

By a simple change of variables we have
$$ \int_{0}^{e}\frac{1}{g\Big(\frac{x}{\eps^\alpha}\Big)+ h\Big(\frac{x}{\eps^\alpha}\Big)}\,dx = e\frac{\eps^\alpha}{e} \int_{0}^{e/\eps^\alpha}\frac{1}{g(y)+ h(y)}\,dy, \quad \forall e\in(0,1).$$
Then, since ${\displaystyle \frac{1}{g(y)+h(y)}\,dy }$ is almost periodic we can pass to the limit at the right-hand side of the last equality above to get
\begin{equation}\label{limite appen}
\lim_{\eps \to 0} \int_{0}^{e}\frac{1}{g\Big(\frac{x}{\eps^\alpha}\Big)+ h\Big(\frac{x}{\eps^\alpha}\Big)}\,dx  \eto e  \lim_{T \to \infty}\frac{1}{T}\int_{0}^{T}\frac{1}{g(y)+h(y)}\,dy,\quad \forall e\in(0,1).
\end{equation}
Finally, from \eqref{decomp appen} and \eqref{limite appen} we get convergence \eqref{weak conv2 appen}.

\par\noindent { ii) Different order of oscillation ($\alpha \neq \beta$). }

 \begin{equation}\label{weak conv2 appen1}
\frac{1}{g\Big(\frac{x}{\eps^\beta}\Big)+ h\Big(\frac{x}{\eps^\alpha}\Big)} {\weto} \frac{1}{p_0} =\frac{1}{L_1L_2}\int_{0}^{L_1}\int_{0}^{L_2}\frac{1}{g(y) + h(z)}\, dz dy \; w-L^2(0,1).
\end{equation}

Observe that in this case we are dealing with two microscopic scales which is a generalization of the classical result for periodic functions. The result is well-known in the literature,
see e.g. \cite{BenLioPap}.

\end{proof}

Now  we get the convergence of the product $G_\eps\frac{\partial {\hat w}^\eps}{\partial x}$.

Observe that  $G_\eps\frac{\partial {\hat w}^\eps}{\partial x}$ is uniformly bounded in $L^2(0,1)$ since 
$$\Big|\Big|\frac{\partial {\hat w}^\eps}{\partial x}\Big|\Big|_{L^2(0,1)}\leq C \quad  \hbox{ and } \quad 0<G_\eps(x)< g_1 +h_1, \hbox{ for each } x \in (0,1).$$
Moreover, taking into account that
$$\frac{\partial}{\partial x}\Big(G_\eps \frac{\partial {\hat w}^\eps}{\partial x}\Big) = -\hat f^\eps G_\eps+ G_\eps {\hat w}^\epsilon,
$$
we deduce that  $G_\eps\frac{\partial {\hat w}^\eps}{\partial x}$ is uniformly bounded in $H^1(0,1).$ Then, it follows that there exists a function $\sigma$ such that, up to subsequences,
$$G_\eps\frac{\partial {\hat w}^\eps}{\partial x} \longrightarrow \sigma \quad \hbox{strongly in } L^2(0,1).$$
Thus,
$$\frac{\partial {\hat w}^\eps}{\partial x} = \frac{1}{G_\eps} \Big(G_\eps\frac{\partial {\hat w}^\eps}{\partial x}\Big){\weto} \frac{1}{p_0} \sigma \quad w-L^2(0,1). $$

Consequently, due to convergence \eqref{weak u1 appen} we have
$$
\frac{\partial u_0}{\partial x}= \frac{1}{p_0} \sigma,$$
or equivalently,
\begin{equation}\label{product conv appen}
G_\eps\frac{\partial {\hat w}^\eps}{\partial x} \eto p_0 \frac{\partial u_0}{\partial x}\quad \hbox{strongly in } L^2(0,1).
\end{equation}

Therefore, in view of \eqref{weak u1 appen}, \eqref{limit f1} and \eqref{product conv appen} we can pass to the limit and we obtain the following weak formulation
$$\int_0^1 \Big\{p_0 \frac{\partial u_0}{\partial x}\frac{\partial \phi}{\partial x} + ({\cal M}(g)+{\cal M}(h)) u_0\phi\}\, dx = \int_0^1 f_0\phi\, dx.$$

Now we are in conditions to state the convergence result

\begin{theorem}\label{main appen}
Let $w^\eps$ be the solution of problem (\ref{1OPI0}). Then, with the definition of $f_0$ given by \eqref{limit f1} and denoting by $\hat f=\frac{f_0}{{\cal M}(g)+{\cal M}(h)}$,  then we have 
$$\hat w^\eps \to \hat w, \hbox{ w-}H^1(\omega),$$ 
$$\eps^{-1}||w^\eps - \hat w||_{L^2(R^\eps)} \to 0,$$
 where   $\hat w \in H^1(0,1)$ is the weak solution of  the following Neumann problem
\begin{equation} \label{homogenized problem1}
\left\{
\begin{gathered}
-\frac{p_0}{{\cal M}(g)+{\cal M}(h)}  {\hat w}_{xx} + \hat w = \hat f, \quad x \in (0,1), \\
\hat w'(0) = \hat w' (1) = 0,
\end{gathered}
\right.
\end{equation}
where the constant $p_0$ is such that 
\begin{equation}
 \frac{1}{h\Big(\frac{x}{\eps^\alpha}\Big) + g\Big(\frac{x}{\eps^\beta}\Big)} \weto \frac{1}{p_0} \quad w-L^2(0,1).
\end{equation}
 Therefore $p_0$ is given by

\begin{eqnarray*}
\frac{1}{p_0}
= 
 \left\{ 
 \renewcommand{\arraystretch}{2.5}
\begin{array}{ll}
{\displaystyle \lim_{T\to \infty} \frac{1}{T}\int_{0}^{T} \frac{1}{g(y) + h(y)}\, dy, \quad \hbox{if } \alpha=\beta,}  \\
 {\displaystyle\frac{1}{L_1L_2}\int_{0}^{L_1}\int_{0}^{L_2}\frac{1}{g(y) + h(z)}\, dz dy, \quad \hbox{if } \alpha \neq \beta.}
\end{array}
\right.
\end{eqnarray*}
\end{theorem}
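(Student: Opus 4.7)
The plan is to combine Proposition \ref{main-reduction}, which has already reduced the doubly oscillating thin-domain problem to the one-dimensional equation \eqref{transformed-problem21}, with a classical one-dimensional ``flux trick'' from periodic homogenization, adapted to the almost-periodic setting through the preceding lemma. All the key building blocks---the a priori bound on $\hat w^\eps$, the weak limits $G_\eps \rightharpoonup {\cal M}(g)+{\cal M}(h)$ and $1/G_\eps \rightharpoonup 1/p_0$, and the strong $L^2$ convergence of the flux $G_\eps \partial_x \hat w^\eps$---have already been assembled in the passage preceding the statement, so the proof amounts to assembling them and then translating the result back to $w^\eps$ on $R^\eps$.

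For the weak $H^1$ convergence of $\hat w^\eps$, I would test \eqref{weak-version-final-problem1} with $\phi = \hat w^\eps$ and use the uniform bounds \eqref{boundk} on $G_\eps$ to get $\|\hat w^\eps\|_{H^1(0,1)} \leq C$; along a subsequence $\hat w^\eps \rightharpoonup u_0$ weakly in $H^1(0,1)$ and strongly in $L^2(0,1)$ by Rellich. Introducing the flux $\sigma^\eps := G_\eps \partial_x \hat w^\eps$, equation \eqref{transformed-problem21} rewrites as $\partial_x \sigma^\eps = G_\eps(\hat w^\eps - \hat f^\eps) \in L^2$, so $\sigma^\eps$ is bounded in $H^1(0,1)$ and therefore relatively compact in $L^2(0,1)$ with strong limit $\sigma$. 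Writing $\partial_x \hat w^\eps = (1/G_\eps)\sigma^\eps$ and combining the weak convergence $1/G_\eps \rightharpoonup 1/p_0$ with the strong $L^2$ convergence of $\sigma^\eps$ identifies $\partial_x u_0 = \sigma/p_0$, i.e.\ $\sigma = p_0 \partial_x u_0$. Passing to the limit in \eqref{weak-version-final-problem1}---with the flux convergence on the gradient term, $G_\eps \rightharpoonup {\cal M}(g)+{\cal M}(h)$ paired with the strong $L^2$ convergence of $\hat w^\eps$ on the zeroth-order term, and hypothesis \eqref{limit f1} on the source---yields
\begin{equation*}
\int_0^1 \bigl\{ p_0\, \partial_x u_0\, \partial_x\phi + ({\cal M}(g)+{\cal M}(h))\, u_0\, \phi \bigr\}\, dx = \int_0^1 f_0\, \phi\, dx,\quad \forall \phi \in H^1(0,1).
\end{equation*}
This is exactly the weak form of \eqref{homogenized problem1} with $\hat f = f_0/({\cal M}(g)+{\cal M}(h))$, and uniqueness then gives $u_0 = \hat w$ and convergence of the full sequence.

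For the second convergence, I would use the triangle inequality $\|w^\eps - \hat w\|_{L^2(R^\eps)} \leq \|w^\eps - \hat w^\eps\|_{L^2(R^\eps)} + \|\hat w^\eps - \hat w\|_{L^2(R^\eps)}$ and estimate each piece separately. The first term is controlled by Proposition \ref{main-reduction}, which provides a vanishing factor $\sqrt{\eta(\eps)}$. For the second, since both $\hat w^\eps$ and $\hat w$ are functions of $x$ alone, a direct integration using \eqref{boundk} yields $\|\hat w^\eps - \hat w\|_{L^2(R^\eps)}^2 \leq C\eps \|\hat w^\eps - \hat w\|_{L^2(0,1)}^2$, which tends to zero after rescaling thanks to the Rellich compactness already used. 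The most delicate ingredient lies not in the theorem itself but in the preceding lemma: establishing $1/G_\eps \rightharpoonup 1/p_0$ in the quasi-periodic case $\alpha=\beta$ with rationally independent periods $L_1,L_2$, which requires recognizing $1/(g+h)$ as almost periodic and using existence of its mean value. Once that is in hand, the rest is a routine adaptation of the standard one-dimensional homogenization argument.
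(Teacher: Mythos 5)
Your proposal is correct and follows essentially the same route as the paper: the paper's ``proof'' of this theorem is precisely the development preceding the statement (a priori bound on $\hat w^\eps$, the weak limits of $G_\eps$ and $1/G_\eps$ via the almost-periodicity lemma, the $H^1$-boundedness and hence strong $L^2$-compactness of the flux $G_\eps \partial_x \hat w^\eps$, and the identification $\sigma=p_0\,\partial_x u_0$ in the limiting weak formulation), which you reproduce faithfully and then combine with Proposition \ref{main-reduction} and the norm scaling $\|\varphi\|_{L^2(R^\eps)}^2\sim\eps\|\varphi\|_{L^2(0,1)}^2$ for $x$-dependent functions to obtain the second convergence.
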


\subsection{n-dimensional case}
Let $\omega \subset \R^n$ be a smooth domain. Then, we consider the following thin domain
\begin{equation*}
R^\epsilon = \Big\{ (x, y) \in \R^{n+1} \; | \;  x \in \omega,  \; -\eps \,h(x/\eps^\alpha) < y < \epsilon \, g(x/\eps^\beta) \Big\},
 \end{equation*}
 where $0<\alpha, \beta<1$ and the functions $g,h \,: \R^n \to \R $ are $C^1$ functions periodic in the cells $[0,L_1]^n$ and $[0, L_2]^n$ respectively.
We consider problem \eqref{transformed-problem-hat2} and 
%
taking ${\hat w}^\eps$ as test function in the variational formulation of \eqref{transformed-problem-hat2} we immediately get the a priori estimate
$$||{\hat w}^\eps||_{H^1(\omega)}\leq C.$$

Similarly to the two-dimensional case we consider two different situations:

\medskip
\par\noindent {\bf i) Same order of oscillation ($\alpha = \beta$) and periods rationally dependent}
Note that if both periods, $L_1$ and $L_2$, are rationally dependent, there exist $p,q \in \N$ such that $pL_1 = q L_2$, then \eqref{transformed-problem-hat2} is the classical problem in homogenization with periodic oscillating coefficients. Therefore, the following convergence result has been proved in the literature by using different techniques in homogenization, see e.g. Chapter 6 in \cite{CiorDonato} or \cite{BenLioPap,CioPau}.

\begin{proposition}
Let $w^\eps$ be the solution of problem (\ref{1OPI0}). Then, with the definition of $f_0$ given by \eqref{limit f1} and denoting by $\hat f=\frac{f_0}{{\cal M}(g)+{\cal M}(h)}$,  we have  
$$\hat w^\eps \to \hat w, \hbox{ w-}H^1(\omega),$$ 
$$\eps^{-1}||w^\eps - \hat w||_{L^2(R^\eps)} \to 0,$$
where $\hat w$ is the unique solution of the following Neumann problem
\begin{equation}\label{homogenized problem1}
\left\{
\begin{gathered}
 -\frac{1}{{\cal M}(g)+{\cal M}(h)} \hbox{div}(A_0  \nabla \hat w) + \hat w = \hat f
\quad \textrm{ in } \omega, \\
\frac{\partial \hat w}{\partial \eta}  = 0  \quad \textrm{ on } \partial \omega.
\end{gathered}
\right. 
\end{equation}

The matrix $A_0=(a_{ij})_{1\leq i,j \leq n}$ is constant and given by
\begin{align*}
a_{ii}= {\cal M}_{Y^*}\Big(G(z) \big(1- \frac{\partial X^i}{\partial z_i}\big)\Big),\\
a_{ij}= {\cal M}_{Y^*}\Big(-G(z) \frac{\partial X^j}{\partial z_i}\big)\Big),\\
\end{align*}
where $G(z)=g(z)+h(z)$, $Y^*=[0, pL_1]^n$ and $X^i$ is the unique solution of the following auxiliary problem
\begin{equation}\label{periodic auxiliary}
\left\{
\begin{gathered}
- \sum_{j=1}^n\frac{\partial}{\partial y_j}\Big(G \frac{\partial(X^i-z_i)}{\partial z_j}\Big)=0
\quad \textrm{ in } Y^*, \\
G\frac{\partial (X^i - z_i)}{\partial \eta}  = 0  \quad \textrm{ on } \partial Q,\\
{\cal M}_{Y^*}(X^i)=0,\\
X^i \quad  Y^*-\hbox{periodic}.
\end{gathered}
\right. 
\end{equation}
\end{proposition}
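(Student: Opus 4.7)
The strategy proceeds in two stages that are combined at the end. First, Proposition \ref{main-reduction} reduces the problem on the thin domain $R^\eps$ to the $n$-dimensional problem \eqref{transformed-problem-hat2} on the fixed domain $\omega$, at the cost of an $H^1(R^\eps)$-error controlled by $\eta(\eps)^{1/2}\|f^\eps\|_{L^2(R^\eps)}$. Second, \eqref{transformed-problem-hat2} is analyzed as a classical periodic homogenization problem: under the current hypotheses ($\alpha=\beta$ with $pL_1=qL_2$) the coefficient $G_\eps(x)=g(x/\eps^\alpha)+h(x/\eps^\alpha)$ is $\eps^\alpha Y^*$-periodic with $Y^*=[0,pL_1]^n$, so the change of parameter $\delta:=\eps^\alpha\to 0$ places us in the textbook setting of $\delta$-periodic coefficients, while hypothesis \eqref{limit f1} supplies the right-hand side convergence $G_\eps\hat f^\eps\rightharpoonup f_0$ in $L^2(\omega)$.

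For the homogenization step I would rewrite \eqref{transformed-problem-hat2} in divergence form
$$-\mathrm{div}(G_\eps\nabla\hat w^\eps)+G_\eps\hat w^\eps = G_\eps\hat f^\eps\quad\text{in }\omega,\qquad G_\eps\,\partial_\nu\hat w^\eps=0\text{ on }\partial\omega,$$
and test with $\hat w^\eps$; the uniform ellipticity $0<C_1\le G_\eps\le C_2$ from (H.2) yields $\|\hat w^\eps\|_{H^1(\omega)}\le C$ and, up to a subsequence, $\hat w^\eps\rightharpoonup\hat w$ weakly in $H^1(\omega)$. I would identify the limit by Tartar's oscillating test function method: using the cell solutions $X^i$ of \eqref{periodic auxiliary}, build correctors $\Phi^i_\eps(x):=x_i-\delta X^i(x/\delta)$, test the equation against $\Phi^i_\eps\varphi$ and the cell problem against $\hat w^\eps\varphi$ for $\varphi\in C_c^\infty(\omega)$, and subtract. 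A standard div--curl passage to the limit delivers $G_\eps\nabla\hat w^\eps\rightharpoonup A_0\nabla\hat w$ in $\mathcal D'(\omega)^n$ with $A_0$ as in the statement, while $G_\eps\rightharpoonup\mathcal M(g)+\mathcal M(h)$ combined with the Rellich compactness $H^1(\omega)\hookrightarrow L^2(\omega)$ handles the zero-order term and \eqref{limit f1} handles the right-hand side. Uniqueness for \eqref{homogenized problem1} then promotes subsequential convergence to the full sequence.

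The claimed convergences for $w^\eps$ are obtained by combining the two stages. The bound $\|w^\eps-\hat w^\eps\|_{L^2(R^\eps)}^2\le C\eta(\eps)\|f^\eps\|^2_{L^2(R^\eps)}$ from Proposition \ref{main-reduction} controls the deviation from the reduced problem, and, viewing $\hat w$ as a function on $R^\eps$ constant in $y$, the elementary scaling $\|u\|_{L^2(R^\eps)}^2\le C\eps\|u\|_{L^2(\omega)}^2$ together with Rellich's compactness yields $\|\hat w^\eps-\hat w\|_{L^2(R^\eps)}^2=o(\eps)$; the triangle inequality then gives both the weak $H^1(\omega)$ convergence of $\hat w^\eps$ and the $\eps$-rescaled $L^2(R^\eps)$ convergence of $w^\eps$.

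The principal technical hurdle is the nonstandard oscillation scale $\eps^\alpha$ with $0<\alpha<1$ rather than the canonical scale $\eps$; this is handled transparently by the change of parameter $\delta=\eps^\alpha$, after which the homogenization step becomes a direct quotation of the standard periodic theory of \cite{CiorDonato,BenLioPap,CioPau}. A secondary subtlety is that \eqref{limit f1} only guarantees weak (and not strong) $L^2(\omega)$-convergence of the oscillating right-hand side, but this is precisely the setting in which Tartar's oscillating test function method operates naturally, so no additional work beyond careful bookkeeping of the weak limits is required.
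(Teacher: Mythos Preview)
Your proposal is correct and follows essentially the same route as the paper. The paper's own treatment of this case is simply to observe that, once Proposition~\ref{main-reduction} has reduced matters to \eqref{transformed-problem-hat2}, the hypotheses $\alpha=\beta$ and $pL_1=qL_2$ make $G_\eps$ genuinely periodic, and then to invoke the classical periodic homogenization literature \cite{CiorDonato,BenLioPap,CioPau} without further detail; your sketch of Tartar's oscillating test function method is precisely an elaboration of that citation, and your combination step via Proposition~\ref{main-reduction} and the scaling $\|u\|_{L^2(R^\eps)}^2\le C\eps\|u\|_{L^2(\omega)}^2$ is the intended bridge back to $R^\eps$.
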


\par\noindent {\bf ii) Same order of oscillation ($\alpha = \beta$) and periods rationally independent}

However if the periods, $L_1$ and $L_2$, are rationally independent the coefficients of \eqref{transformed-problem-hat2} are rapidly oscillating quasi-periodic functions. Therefore, this specific case can be analyzed in the context of the more general almost periodic homogenization theory, see for instance \cite{OleiZi,Koz}. In fact, Proposition \ref{quasiperiodic} can be proved rigorously by applying directly the ideas introduced in \cite{Koz}. 

Note that, the main difference respect to periodic homogenization lies in the solvality of the auxiliary problem. 
To overcome this difficulty Kozlov lifted the auxiliary equation to a sub-elliptic problem on a higher dimensional torus which he solved thanks to a higher-order Poincar\'e inequality implied by the Diophantine condition. 

Therefore, since the auxiliary problem in our particular case can be degenerate,  the following ``frequency condition'' is necessary for the formation of the quasiperiodic solutions of the auxiliary problem

\medskip
\begin{itemize}
\item[{\bf Diophantine condition}] 
There exists $s_0 >0$ such that
$$|n_1L_1+n_2L_2| \geq \frac{C}{|n_1+n_2|^{s_0}}, \quad \forall (n_1,n_2) \in \N^2.$$
\end{itemize} 

\begin{proposition}\label{quasiperiodic}
Let $w^\eps$ be the solution of problem (\ref{1OPI0}). Then, with the definition of $f_0$ given by \eqref{limit f1} and denoting by $\hat f=\frac{f_0}{{\cal M}(g)+{\cal M}(h)}$,  we have 
$$\hat w^\eps \to \hat w, \hbox{ w-}H^1(\omega),$$ 
$$\eps^{-1}||w^\eps - \hat w||_{L^2(R^\eps)} \to 0,$$
where $\hat w$ is the unique solution of the following Neumann problem
\begin{equation}\label{homogenized problem1}
\left\{
\begin{gathered}
 -\frac{1}{{{\cal M}(g)+{\cal M}(h)}} \hbox{div}(A_0  \nabla \hat w) + \hat w = \hat f
\quad \textrm{ in } \omega, \\
\frac{\partial \hat w}{\partial \eta}  = 0  \quad \textrm{ on } \partial \omega.
\end{gathered}
\right. 
\end{equation}
The matrix $A_0=(a_{ij})_{1\leq i,j \leq n}$ is constant and it is given by
\begin{align*}
a_{ij}= \limsup_{L\to \infty}\frac{1}{(2L)^n}\int_{[-L,L]^n}\Big(-G(z) \frac{\partial X^j}{\partial z_i}\big)\Big)dz,\\
\end{align*}
where $G(z)=g(z)+h(z)$, $X^j$ and its derivatives are quasiperiodic functions which satisfy
\begin{equation}\label{periodic auxiliary22}
\left\{
\begin{gathered}
- \Big(\sum_{i=1}^n\frac{\partial}{\partial z_i}\Big(G \frac{\partial(X^j-z_j)}{\partial z_i}\Big)=0,\\
 \limsup_{L\to \infty}\frac{1}{(2L)^n}\int_{[-L,L]^n} X^j dz=0.
\end{gathered}
\right. 
\end{equation}
\end{proposition}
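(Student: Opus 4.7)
The plan is to combine Proposition \ref{main-reduction}, which reduces the thin-domain problem to the $n$-dimensional equation \eqref{transformed-problem-hat2} on $\omega$, with a homogenization argument carried out in the quasiperiodic framework introduced by Kozlov \cite{Koz}. Once $\hat w^\eps \rightharpoonup \hat w$ weakly in $H^1(\omega)$ with $\hat w$ solving the claimed homogenized equation, the bound $\eps^{-1}\|w^\eps - \hat w\|_{L^2(R^\eps)} \to 0$ will follow by combining the $H^1$-estimate of Proposition \ref{main-reduction} with the $\sqrt{\eps}$ factor coming from integration across the thin direction, together with the Rellich compactness $\hat w^\eps \to \hat w$ strongly in $L^2(\omega)$.

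First I would establish the uniform bound $\|\hat w^\eps\|_{H^1(\omega)} \leq C$ by testing \eqref{transformed-problem-hat2} against $\hat w^\eps$ and using the two-sided bound on $G_\eps$ from \textbf{(H.2)}. Extracting subsequences, $\hat w^\eps \rightharpoonup \hat w$ weakly in $H^1(\omega)$, and since $G_\eps$ is uniformly bounded the flux $\xi^\eps := G_\eps \nabla \hat w^\eps$ is bounded in $L^2(\omega;\R^n)$; along a further subsequence $\xi^\eps \rightharpoonup \xi$ weakly. The remainder of the argument is devoted to identifying $\xi = A_0 \nabla \hat w$.

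The core of the proof, and the main obstacle, is the construction of the correctors $X^j$ as quasiperiodic solutions of \eqref{periodic auxiliary22}. Classical periodic homogenization fails here: because $g$ and $h$ have rationally independent periods, $G = g+h$ is genuinely quasiperiodic and there is no compact periodicity cell on which to solve the auxiliary equation by Lax-Milgram. Following Kozlov, one lifts to the $2n$-dimensional torus by writing $G(z) = \tilde G(z,z)$ with $\tilde G(y_1,y_2) = g(y_1) + h(y_2)$ periodic in each block, and seeks a periodic $\tilde X^j$ satisfying a degenerate elliptic equation along the diagonal direction $\partial_{y_1} + \partial_{y_2}$. The Diophantine condition on $(L_1,L_2)$ produces a higher-order Poincar\'e-type inequality controlling $\tilde X^j$ by its diagonal gradient, making this degenerate problem well posed in a suitable Sobolev space on $\mathbb{T}^{2n}$; restricting $\tilde X^j$ and its derivatives to the diagonal yields the required quasiperiodic corrector. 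Without the lifting and the Diophantine hypothesis one cannot even make sense of $X^j$, whereas once the correctors are available the remaining steps parallel the periodic case.

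To conclude, I would run the oscillating test function argument using $\Phi^\eps_j(x) = x_j - \eps^\alpha X^j(x/\eps^\alpha)$. The mean-value theorem for quasiperiodic functions, applied to $G\,\partial_i X^j$ on $\R^n$ and expressed as a $\limsup$ average on $[-L,L]^n$, yields $G_\eps \nabla \Phi^\eps_j \rightharpoonup (a_{ij})_{i=1}^n$ weakly in $L^2$, while the auxiliary equation gives $\operatorname{div}(G_\eps \nabla \Phi^\eps_j) = 0$ in the sense of distributions. The div-curl lemma applied to the pair $(\xi^\eps, \nabla \Phi^\eps_j)$, combined with the strong $L^2$ convergence $\Phi^\eps_j \to x_j$, the weak convergences $\hat w^\eps \rightharpoonup \hat w$ in $H^1(\omega)$, $G_\eps \rightharpoonup {\cal M}(g)+{\cal M}(h)$ in $L^2$, and $G_\eps \hat f^\eps \rightharpoonup f_0$ from \eqref{limit f1}, allows passage to the limit in the weak formulation of \eqref{transformed-problem-hat2} and identifies $\xi = A_0 \nabla \hat w$. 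Dividing by ${\cal M}(g)+{\cal M}(h)$ recovers the homogenized equation with the stated prefactor, and uniqueness of the limit problem upgrades the convergence from subsequential to the full sequence.
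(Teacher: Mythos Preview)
Your proposal is correct and follows precisely the approach the paper itself indicates: the paper does not give a detailed proof of this proposition but rather states that it ``can be proved rigorously by applying directly the ideas introduced in \cite{Koz}'', highlighting the Diophantine condition and Kozlov's lifting of the degenerate auxiliary problem to a higher-dimensional torus --- exactly the mechanism you spell out. Your sketch in fact supplies more detail than the paper (the oscillating test functions $\Phi^\eps_j$, the div-curl argument, and the passage from Proposition~\ref{main-reduction} plus Rellich to the $L^2(R^\eps)$ estimate), but the strategy is the same.
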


\par\noindent {\bf ii) Different order of oscillation ($\alpha \neq \beta$). }

In this case two microscopic scales appear, $\eps^\alpha$ and $\eps^\beta$ with $\alpha \neq \beta$. This means that each scale can be distinguished from the other, the frequency of the oscillations in both boundaries are not of the same order. Therefore, this problem must be studied in the framework of reiterated homogenization theory, see \cite{BenLioPap,AlBri}.

By using direclty the generalization of two-scale convergence given in \cite{AlBri} or the reiterated unfolding method \cite{MeuShaf05}   the homogenized problem for \eqref{transformed-problem3} is obtained. Then, we get the following homogenization result assuming without loss of generality that $\alpha$ is less than $\beta$.
\begin{proposition}
Let $w^\eps$ be the solution of problem (\ref{1OPI0}). Then, with the definition of $f_0$ given by \eqref{limit f1} and denoting by $\hat f=\frac{f_0}{{\cal M}(g)+{\cal M}(h)}$,  we have 
$$\hat w^\eps \to \hat w, \hbox{ w-}H^1(\omega),$$ 
$$\eps^{-1}||w^\eps - \hat w||_{L^2(R^\eps)} \to 0,$$
where $\hat w$ is the unique solution of the following Neumann problem
\begin{equation}\label{homogenized problem1}
\left\{
\begin{gathered}
 -\frac{1}{{{\cal M}(g)+{\cal M}(h)}} \hbox{div}(A_0  \nabla \hat w) +    \hat w = \hat f
\quad \textrm{ in } \omega, \\
\frac{\partial \hat w}{\partial \eta}  = 0  \quad \textrm{ on } \partial \omega,
\end{gathered}
\right. 
\end{equation}
where $A_0$ is a constant matrix defined by the inductive homogenization formula

\begin{itemize}
\item $A_2$ is a diagonal matrix of order $n$ with the function $G(x,z)=g(x)+h(z)$ in the elements of the diagonal.
\item $A_1$ is obtained by periodic homogenization of $A_2(x,\frac{x}{\eps^\beta})$.
\item $A_0$ is obtained by periodic homogenization of $A_1(\frac{x}{\eps^\alpha})$.
\end{itemize}
\end{proposition}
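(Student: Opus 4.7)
Without loss of generality take $\alpha<\beta$, so $\eps^\alpha$ is the slower scale and $\eps^\beta$ the faster one. The plan is to combine Proposition \ref{main-reduction}, which reduces the thin-domain problem to the $n$-dimensional equation \eqref{transformed-problem-hat2} on the fixed domain $\omega$, with a reiterated two-scale homogenization argument for the coefficient $G_\eps(x)=g(x/\eps^\beta)+h(x/\eps^\alpha)$ that appears both in the divergence term and in the zero-order term of \eqref{transformed-problem-hat2}. Proposition \ref{main-reduction} already yields $\|w^\eps-\hat w^\eps\|_{H^1(R^\eps)}^2\le C\eta(\eps)\|f^\eps\|_{L^2(R^\eps)}^2$, so the thin-domain convergences in the statement will follow once $\hat w^\eps\to\hat w$ weakly in $H^1(\omega)$ and strongly in $L^2(\omega)$ (the latter upgrading to $\eps^{-1}\|\hat w^\eps-\hat w\|^2_{L^2(R^\eps)}\to 0$ since $\hat w$ is $y$-independent and $|R^\eps|\sim\eps$).

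The first step is to derive uniform bounds: testing \eqref{transformed-problem-hat2} with $\hat w^\eps$ and using hypothesis {\bf (H.2)} together with \eqref{cota g+h} gives $\|\hat w^\eps\|_{H^1(\omega)}\le C\|\hat f^\eps\|_{L^2(\omega)}$, uniformly in $\eps$. By Rellich we may extract a subsequence with $\hat w^\eps\rightharpoonup \hat w$ in $H^1(\omega)$ and $\hat w^\eps\to\hat w$ in $L^2(\omega)$. The second step is to identify $\hat w$ by reiterated two-scale convergence in the sense of Allaire--Briane \cite{AlBri} (alternatively, by the reiterated unfolding method \cite{MeuShaf05}). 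Write the coefficient as $G_\eps(x)=G(x/\eps^\alpha,x/\eps^\beta)$ with $G(\xi,\zeta)=h(\xi)+g(\zeta)$, $[0,L_1]^n$-periodic in $\xi$ and $[0,L_2]^n$-periodic in $\zeta$, and rewrite \eqref{transformed-problem-hat2} in divergence form as $-\mathrm{div}(G_\eps\nabla \hat w^\eps)+G_\eps\hat w^\eps=G_\eps\hat f^\eps$. The reiterated two-scale limit of $\hat w^\eps$ decomposes as $\hat w(x)+\hat w_1(x,\xi)+\hat w_2(x,\xi,\zeta)$ with $\hat w_1,\hat w_2$ periodic in the fast variables and of zero mean; the corresponding fast cell problem gives the intermediate matrix $A_1(\xi)$ at scale $\eps^\beta$, and a second homogenization in $\xi$ at scale $\eps^\alpha$ produces the constant $A_0$ of the statement. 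Because of the separable form $G(\xi,\zeta)=h(\xi)+g(\zeta)$, these two cell problems reduce exactly to the inductive chain $A_2\rightsquigarrow A_1\rightsquigarrow A_0$ described in the proposition.

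Passing to the limit in the variational formulation uses hypothesis \eqref{limit f1} on the right-hand side together with the weak limit $G_\eps\rightharpoonup\mathcal M(g)+\mathcal M(h)$ (Average Convergence for Periodic Functions); the identification of the limit flux is done with oscillating test functions built from the two successive correctors, and the uniqueness of $\hat w$ forces the entire sequence to converge. Combining this with the reduction estimate of Proposition \ref{main-reduction} and using that $\|\hat w^\eps-\hat w\|_{L^2(R^\eps)}^2\le C\eps\|\hat w^\eps-\hat w\|_{L^2(\omega)}^2\to o(\eps)$ delivers the claimed $\eps^{-1}\|w^\eps-\hat w\|_{L^2(R^\eps)}\to 0$. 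The main obstacle is verifying admissibility of the reiterated homogenization framework in our setting: one must check that the intermediate coefficient $A_1(\cdot)$ inherits enough regularity and periodicity in $\xi$ from $G(\xi,\zeta)$ so that the outer $\eps^\alpha$-homogenization is legitimate, and that both cell problems are well-posed in the appropriate periodic Sobolev classes. Once this admissibility is in place, the rest of the argument is the standard two-scale machinery recalled in \cite{BenLioPap,AlBri,MeuShaf05}, requiring no ingredients beyond those already used in the earlier subsections of this section.
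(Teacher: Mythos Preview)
Your proposal is correct and follows essentially the same route as the paper. The paper's own justification here is extremely terse: it simply observes that, once Proposition~\ref{main-reduction} has reduced matters to the $n$-dimensional equation \eqref{transformed-problem-hat2} with coefficient $G_\eps(x)=g(x/\eps^\beta)+h(x/\eps^\alpha)$, the presence of two separated scales $\eps^\alpha\neq\eps^\beta$ places the problem squarely in the reiterated homogenization framework, and then cites \cite{AlBri} (multiscale convergence) and \cite{MeuShaf05} (reiterated unfolding) for the passage to the limit and the inductive description $A_2\rightsquigarrow A_1\rightsquigarrow A_0$. Your outline unpacks exactly this argument---uniform $H^1$ bounds, reiterated two-scale limit with correctors $\hat w_1,\hat w_2$, identification of the flux via oscillating test functions, and the final combination with the reduction estimate---so there is no methodological difference, only a difference in the level of detail you supply.
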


%
%

\section{Convergence properties of the semilinear parabolic equation}

In this section we show some convergence properties of the solutions and attractors of the evolutionary equations \eqref{1OPI0 parabolic} assuming that the functions which define the oscillating functions satisfy \eqref{per}.  In particular we analyze the relation between the semilinear parabolic problem defined in \eqref{1OPI0 parabolic} and its homogenized limit

\begin{equation}\label{homogenized problem par}
\left\{
\begin{gathered}
 u_{0_t}- \frac{1}{{\cal M}(g)+{\cal M}(h)} \hbox{div}(A_0  \nabla u_0) + u_0 = f(u_0)
\quad \textrm{ in } \omega, \\
\frac{\partial u_0}{\partial \eta}  = 0  \quad \textrm{ on } \partial \omega,
\end{gathered}
\right. 
\end{equation}
where $A_0$ depends on the dimension and the relation between $\alpha$ and $\beta$ as we have seen in the previous section.  The behavior of the nonlinear dynamics in thin domains with not necessarily oscillating boundaries  is not a new topic and we would like to refer to \cite{HR}, \cite{R}, \cite{ArrSan} for some works in this respect. Also, \cite{ArrCarPerSil} deals with the case of thin domains with oscillating boundaries. 

\begin{remark}
We provide only a sketch of the proofs of the results of this section because they are obtained by using standard arguments of a general approach discussed, for instance, in \cite{ACar,ACarLo1,ACarLo2,ACarLo3,ArrCarPerSil,CarPis,Per}.
\end{remark}  

The functional setting given by \cite{He81,Ha} allows to obtain the convergence of the linear semigroup given by \eqref{1OPI0 parabolic} to the one established by \eqref{homogenized problem par}. The concept of compact convergence that we adopt here was introduced in the works \cite{Stum1,Stum2,Stum3,Vai1,Vai2}.

First, we consider  a family of Hilbert spaces $\{Z_\eps\}_{\eps>0}$ defined by $Z_\eps=L^2(R^\eps)$ with the canonical inner product
$$(u,v)_\eps=\frac{1}{\eps}\int_{R^\eps} u(x,y)v(x,y)\, dx dy,$$
and let $Z_0=L^2(\omega)$ be the limit Hilbert  space with the following inner product
$$(u,v)_0=({\cal M}(g)+{\cal M}(h)) \int_{\omega}  u(x,y)v(x,y)\, dx.$$

Observe that the inner product in $Z_\eps$ has been scaled with a factor of $1/\eps$.  Hence, the induced norm will be also rescaled by the same factor. We will use the notation:
$$|||u|||_{Z_\eps}^2=\frac{1}{\eps}\|u\|_{L^2(R^\eps)}^2.$$

Now, we write the elliptic problem \eqref{1OPI0} as an abstract equation $L_\eps w^\eps=f^\eps$ where $L_\eps: \mathcal{D}(L_\eps)\subset Z_\eps \rightarrow  Z_\eps$ is the self adjoint, positive linear operator with compact resolvent defined as follows
$$\mathcal{D}(L_\eps)=\Big\{\omega^\eps \in H^2(R^\eps)| \frac{\partial w^\epsilon}{\partial \nu^\epsilon} = 0
\, \textrm{ on } \partial R^\epsilon \Big\},$$
\begin{equation}\label{opLe}
L_\eps w^\eps= - \Delta w^\epsilon + w^\epsilon, \quad w^\eps \in \mathcal{D}(L_\eps).
\end{equation}
Moreover, we denote by $Z_\eps^\alpha$ the fractional power scale associated to operators $L_\eps$, with $0\leq \alpha \leq 1$ and $0\leq \eps \leq 1$. Then, $Z_\eps=Z_\eps^0$ and 
$Z_\eps^{\frac{1}{2}}$ is the Sobolev Space $H^1(R^\eps)$ with norm $|||\cdot |||^2_{Z_\eps^{1/2}}=||\cdot ||^2_{H^1(R^\eps)}$.

Similarly, we rewrite the limit elliptic problem as $L_0 u_0=f$ where $L_0: \mathcal{D}(L_0)\subset Z_0 \rightarrow  Z_0$ is the self adjoint, positive linear operator with compact resolvent defined as follows
$$\mathcal{D}(L_0)=\Big\{u \in H^2(\omega)| \frac{\partial u}{\partial \nu} = 0
\, \textrm{ on } \partial \omega \Big\},$$
\begin{equation}\label{opL0}
L_0u= -\frac{1}{{\cal M}(g)+{\cal M}(h)} \hbox{div}(A_0  \nabla u) + u , \quad u \in \mathcal{D}(L_0),
\end{equation}
where $A_0$ is the matrix of homogenized coefficients obtained in the previous section. Notice that, $L_0$ is a positive self-adjoint operator with compact resolvent.

In the previous section we have passed to limit in the variational formulation of \eqref{VFP}. Now, we are in conditions to apply the concept of compact convergence to obtain convergence properties of the eigenvalues, eigenfunctions and the linear semigroups generated by $L_\eps$ and $L_0$.

Before recalling the main concepts of convergence we consider the family of linear continuous operators $E_\eps: Z_0 \rightarrow Z_\eps$ given by
$$E_\eps u(x,y)= u(x) \hbox{ on } \omega, \quad \forall u \in Z_0.$$
Then, it is clear $|||E_\eps u|||_{Z_\eps} \to \|u\|_{Z_0}.$ 
Analogously, we can consider $E_\eps: Z_0^1 \rightarrow Z_\eps^1$ and, taking in $Z_0^1$ the equivalent norm  $|| - \Delta u + u ||_{Z_0}$ we get
$$|||E_\eps u|||_{Z_\eps^1} \to \|u\|_{Z_0^1}.$$
Consequently, since 
$$\sup_{0\leq \eps \leq 1}\{\|E_\eps\|_{\mathcal{L}(Z_0,Z_\eps)}, \|E_\eps\|_{\mathcal{L}(Z_0^1,Z_\eps^1)}\} < \infty,$$
we get by interpolation that
$$C=\sup_{\eps>0}\|E_\eps\|_{\mathcal{L}(Z_0^\alpha,Z_\eps^\alpha)}<\infty \, \hbox{ for } 0\leq \alpha \leq 1. $$
Now we recall the main concept of convergence associated to the operators $\{E_\eps\}_{\eps>0}$.

\begin{definition}\label{ compact-Convergence}
We say that a family of compact operators $\{B_\eps \in \mathcal{L}(Z_\eps)| \eps>0\}$ converges compactly to a compact operator $B \in \mathcal{L}(Z_0)$, we write 
$B_\eps \stackrel{CC}{\longrightarrow}B$, if for any family $\{f^\eps\}_{\eps>0}$ with $|||f^\eps|||_{Z_\eps}\leq 1$ we have
\begin{itemize}
\item For each subsequence $\{B_{\eps_ m}f^{\eps_ m}\}$ of  a sequence $\{B_{\eps_ n}f^{\eps_ n}\}$,  $\eps_n \to 0$,  there exits a subsequence $\{B_{\eps_ {m'}}f^{\eps_ {m'}}\}$ and $F \in Z_0$ such that $|||B_{\eps_ {m'}}f^{\eps_ {m'}}-E_{\eps_{m'}}F |||_{Z_\eps} \to 0$.
\item There exists $B \in \mathcal{L}(Z_0)$ such that $|||B_{\eps} f^{\eps}-E_{\eps}Bf |||_{Z_\eps} \to 0$ if $||| f^{\eps}-E_{\eps}f |||_{Z_\eps} \to 0.$
\end{itemize} 
\end{definition}

Moreover, the following lemma holds, see Lemma 4.7 in \cite{ACarLo1}.
\begin{lemma}\label{key}
Assume that $\{B_\eps \in \mathcal{L}(Z_\eps)\}_{\eps \in (0,1]}$ converges compactly to $B$ as $\eps \to 0.$ Then, 
\begin{itemize}
\item[i)]$\|B_\eps\|_{\mathcal{L}(Z_\eps)}\leq C$, for some constant $C$ independent of $\eps$.
\item[ii)] Assume that $\mathcal{N}(I+B)=\{0\}$ then, there exists an $\eps_0>0$ and $M>0$ such that
$$\|(I+B_\eps)^{-1}\|_{\mathcal{L}(Z_\eps)}\leq M, \quad \forall \eps \in [0, \eps_0].$$
\end{itemize}
\end{lemma}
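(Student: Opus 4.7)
The plan is to prove both parts by contradiction, in each case exploiting a different clause of the definition of compact convergence.

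For part (i), I would suppose that $\|B_\eps\|_{\mathcal{L}(Z_\eps)}$ is not uniformly bounded. Then one can extract $\eps_n \to 0$ and $f^{\eps_n} \in Z_{\eps_n}$ with $|||f^{\eps_n}|||_{Z_{\eps_n}} \leq 1$ yet $|||B_{\eps_n} f^{\eps_n}|||_{Z_{\eps_n}} \to \infty$. Applying the first (compactness) clause of the definition to $\{f^{\eps_n}\}$ yields a further subsequence along which $|||B_{\eps_{n'}} f^{\eps_{n'}} - E_{\eps_{n'}} F|||_{Z_{\eps_{n'}}} \to 0$ for some $F \in Z_0$. Since $|||E_\eps F|||_{Z_\eps} \to \|F\|_{Z_0}$, in particular $|||E_{\eps_{n'}} F|||$ remains bounded, and therefore so does $|||B_{\eps_{n'}} f^{\eps_{n'}}|||$, contradicting the blow-up. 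This yields the uniform constant $C$.

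For part (ii), I would argue by contradiction once more. If no uniform $M$ existed, one could pick $\eps_n \to 0$ and $v^{\eps_n} \in Z_{\eps_n}$ with $|||v^{\eps_n}||| = 1$ such that $g^{\eps_n} = (I+B_{\eps_n})v^{\eps_n}$ satisfies $|||g^{\eps_n}||| \to 0$. Writing $v^{\eps_n} = g^{\eps_n} - B_{\eps_n} v^{\eps_n}$ and invoking the first clause on the bounded family $\{v^{\eps_n}\}$, I extract a subsequence and some $w \in Z_0$ with $|||B_{\eps_{n'}} v^{\eps_{n'}} - E_{\eps_{n'}} w|||_{Z_{\eps_{n'}}} \to 0$, and hence $|||v^{\eps_{n'}} + E_{\eps_{n'}} w||| \to 0$. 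The asymptotic norm preservation $|||E_\eps u||| \to \|u\|_{Z_0}$ then forces $\|w\|_{Z_0} = 1$; in particular $w \neq 0$.

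The crucial step — and the place I expect the main difficulty — is to identify $w$ as an element of $\mathcal{N}(I+B)$, which then contradicts the injectivity hypothesis. From $|||v^{\eps_{n'}} - E_{\eps_{n'}}(-w)||| \to 0$, the second (continuity) clause of the definition applies and gives $|||B_{\eps_{n'}} v^{\eps_{n'}} - E_{\eps_{n'}}(-Bw)||| \to 0$. Comparing this with the earlier limit $|||B_{\eps_{n'}} v^{\eps_{n'}} - E_{\eps_{n'}} w||| \to 0$ via the triangle inequality, and again using $|||E_\eps u||| \to \|u\|_{Z_0}$, one obtains $\|w + Bw\|_{Z_0} = 0$, that is $(I+B)w = 0$. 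The hypothesis $\mathcal{N}(I+B) = \{0\}$ then gives $w = 0$, contradicting $\|w\|_{Z_0} = 1$. The subtle point is precisely this double invocation of compact convergence: the first clause only produces a candidate limit $w$, and the second clause is what pins it down to the kernel of $I+B$, with the norm preservation of $E_\eps$ serving as the bridge that rigidly couples the two limits.
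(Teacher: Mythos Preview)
Your argument is correct and is the standard contradiction approach for this type of statement. Note, however, that the paper does not actually supply its own proof of this lemma: it simply states the result and refers the reader to Lemma~4.7 in \cite{ACarLo1}. Your proposal therefore cannot be compared against a proof in the present paper, but it does match the classical line of reasoning one finds in that reference and in the broader literature on compact convergence (Stummel, Vainikko). One small expository point worth making explicit in part~(ii): the negation of the conclusion covers both the case where $(I+B_{\eps_n})$ fails to be invertible for some sequence $\eps_n\to 0$ (then by the Fredholm alternative for the compact perturbation $B_{\eps_n}$ one finds a unit kernel vector, so $g^{\eps_n}=0$) and the case where the inverses exist but their norms blow up; your choice of $v^{\eps_n}$ with $|||v^{\eps_n}|||=1$ and $|||(I+B_{\eps_n})v^{\eps_n}|||\to 0$ handles both at once, but it is worth saying so.
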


Then, the convergence results of previous section can be rewritten according to this framework. In particular, we have easily the following two results.

\begin{corollary} \label{comp}
The family of compact operators $\{L_\eps^{-1} \in \mathcal{L}(Z_\eps)\}_{\eps>0}$ converges compactly to the compact operator $L_0^{-1} \in \mathcal{L}(Z_0)$ as $\eps \to 0$. 
\end{corollary}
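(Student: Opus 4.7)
The plan is to verify the two items of Definition \ref{ compact-Convergence} by combining the dimensional reduction of Proposition \ref{main-reduction} with the homogenisation theorems of Section~3. Given $f^\eps\in Z_\eps$ with $|||f^\eps|||_{Z_\eps}\leq 1$, the key auxiliary objects are the fibre average $\hat f^\eps(x)=(\eps K_\eps(x))^{-1}\int_{-\eps k^1_\eps(x)}^{\eps k^2_\eps(x)}f^\eps(x,y)\,dy$ and the corresponding solution $\hat w^\eps$ of \eqref{transformed-problem-hat}; I also set $w^\eps=L_\eps^{-1}f^\eps$.

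First I would translate the estimate of Proposition \ref{main-reduction} into the scaled norm. Since $\|f^\eps\|_{L^2(R^\eps)}^2=\eps\,|||f^\eps|||_{Z_\eps}^2$ and $E_\eps\hat w^\eps$ is precisely the extension of $\hat w^\eps$ that is constant in $y$, \eqref{main-estimate} immediately yields $|||w^\eps-E_\eps\hat w^\eps|||_{Z_\eps}^2\leq C\eta(\eps)|||f^\eps|||_{Z_\eps}^2\to 0$, so the limit behaviour of $L_\eps^{-1}f^\eps$ in $Z_\eps$ is controlled by that of $\hat w^\eps$ in $Z_0$ through $E_\eps$. A Cauchy--Schwarz computation in the $y$ variable together with hypothesis (H.2) provides $\|\hat f^\eps\|_{L^2(\omega)}\leq C|||f^\eps|||_{Z_\eps}$, and testing \eqref{transformed-problem-hat} against $\hat w^\eps$ gives $\|\hat w^\eps\|_{H^1(\omega)}\leq C$.

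For the first item of Definition \ref{ compact-Convergence} I would then extract from any subsequence a further subsequence along which $\hat f^\eps\weto\hat f$ in $L^2(\omega)$ and, by Rellich, $\hat w^\eps\to F$ strongly in $L^2(\omega)$, Theorem \ref{main appen} and its $n$-dimensional counterparts identifying $F$ as the unique solution of the homogenised Neumann problem with datum $\hat f/({\cal M}(g)+{\cal M}(h))$. The bound $|||E_\eps g|||_{Z_\eps}^2=\int_\omega K_\eps|g|^2\,dx\leq C\|g\|_{L^2(\omega)}^2$ coming from (H.2) then closes the item via the triangle inequality $|||w^\eps-E_\eps F|||_{Z_\eps}\leq |||w^\eps-E_\eps\hat w^\eps|||_{Z_\eps}+C\|\hat w^\eps-F\|_{L^2(\omega)}$. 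For the second item, assume $|||f^\eps-E_\eps f|||_{Z_\eps}\to 0$; since $E_\eps f$ is constant in $y$ one has $(\eps K_\eps(x))^{-1}\int_{-\eps k^1_\eps}^{\eps k^2_\eps} E_\eps f\,dy=f(x)$, so applying the same Cauchy--Schwarz bound to $f^\eps-E_\eps f$ upgrades the convergence to $\hat f^\eps\to f$ strongly in $L^2(\omega)$, and continuous dependence of the limit problem on its data identifies the limit of $\hat w^\eps$ as $L_0^{-1}f$.

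The delicate point I anticipate is the book-keeping that connects the $H^1(R^\eps)$ bound of Proposition \ref{main-reduction} to the rescaled norm $|||\cdot|||_{Z_\eps}$ and ensures that the weak limit of $\hat f^\eps$ is fed correctly into the homogenised equation supplied by the results of Section~3; once this is handled, the two items of Definition \ref{ compact-Convergence} fall out from routine weak/strong compactness arguments and uniqueness of the limit problem.
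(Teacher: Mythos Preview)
Your proposal is correct and follows exactly the route the paper intends: the corollary is stated without proof, with the remark that ``the convergence results of previous section can be rewritten according to this framework,'' and your outline is precisely that rewriting --- Proposition~\ref{main-reduction} reduces $L_\eps^{-1}f^\eps$ to $E_\eps\hat w^\eps$ up to an error vanishing in $|||\cdot|||_{Z_\eps}$, and then the $H^1(\omega)$ bound on $\hat w^\eps$ plus the homogenisation theorems of Section~3 handle the two items of Definition~\ref{ compact-Convergence}. One small point of care: for item~(i) you only need Rellich to produce \emph{some} $F\in Z_0$, so the identification step is optional there; when you do invoke the homogenisation theorems, note that the hypothesis \eqref{limit f1} is on $G_\eps\hat f^\eps$ rather than on $\hat f^\eps$ itself, so extract your weakly convergent subsequence for that product (it is bounded since $G_\eps\in L^\infty$), and in item~(ii) the strong convergence $\hat f^\eps\to f$ combined with $G_\eps\weto {\cal M}(g)+{\cal M}(h)$ in $L^\infty$-weak-$*$ does give $G_\eps\hat f^\eps\weto ({\cal M}(g)+{\cal M}(h))f$, which is exactly the $f_0$ needed to conclude $\hat w=L_0^{-1}f$.
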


\begin{corollary}\label{resolvent}
Let $M_\eps: L^r(R^\eps) \rightarrow L^r(\omega)$ be the bounded linear operator given by 
$$ M_\eps f^\eps(x)= \frac{1}{\eps}\int_{-\eps \,h(x/\eps^\alpha)}^{\epsilon \, g(x/\eps^\beta)} f^\eps (x, y) dy, \quad x \in \omega.$$
Then, for each $\{f^\eps\} \subset Z_\eps$ with $|||f^\eps|||_{Z_\eps}$ uniformly bounded in $\eps$ there exists a subsequence such that the following convergence holds
$$|||L_\eps^{-1}f^\eps - E_\eps L_0^{-1}M_\eps f^\eps |||_{Z_\eps}\eto 0.  $$ 
Moreover, there exists $\eps_0>0$ and a function $\nu: (0, \eps_0)\rightarrow (0, \infty)$, with $\nu(\eps)\eto 0$, such that
\begin{equation}\label{estimatenu}
\|L_\eps^{-1} - E_\eps L_0^{-1}M_\eps \|_{\mathcal{L}(Z_\eps)}\leq \nu(\eps), \quad \forall \eps \in (0, \eps_0).
\end{equation}
\end{corollary}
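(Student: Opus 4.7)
The plan is to split the comparison via a triangle inequality, using the reduced solution $\hat w^\eps$ of \eqref{transformed-problem-hat} (viewed as a $y$-independent function on $R^\eps$) as an intermediate object. That is, write
\[
|||L_\eps^{-1}f^\eps - E_\eps L_0^{-1}M_\eps f^\eps|||_{Z_\eps} \le |||L_\eps^{-1}f^\eps - E_\eps \hat w^\eps|||_{Z_\eps} + |||E_\eps\hat w^\eps - E_\eps L_0^{-1}M_\eps f^\eps|||_{Z_\eps}.
\]
The first term will be controlled by the quantitative estimate of Proposition \ref{main-reduction}; the second by the qualitative homogenization theorems of Section~3.

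For the first term, Proposition \ref{main-reduction} gives $\|L_\eps^{-1}f^\eps - \hat w^\eps\|_{H^1(R^\eps)}^2 \le C\eta(\eps)\|f^\eps\|_{L^2(R^\eps)}^2$, and rescaling by $\eps^{-1}$ to pass to the norm $|||\cdot|||_{Z_\eps}$ yields $|||L_\eps^{-1}f^\eps - E_\eps \hat w^\eps|||_{Z_\eps} \le C\sqrt{\eta(\eps)}\,|||f^\eps|||_{Z_\eps}$, a bound that is both uniform in $f^\eps$ and explicit in $\eps$.

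For the second term, first observe via Cauchy--Schwarz in the $y$ direction together with \eqref{boundk} that $\|M_\eps f^\eps\|_{L^2(\omega)} \le C\,|||f^\eps|||_{Z_\eps}$, so under the hypothesis $|||f^\eps|||_{Z_\eps}\le 1$ the family $\{M_\eps f^\eps\}$ is bounded in $L^2(\omega)$. Up to a subsequence, $M_\eps f^\eps \rightharpoonup f_0$ weakly in $L^2(\omega)$, and the compactness of $L_0^{-1}$ gives $L_0^{-1}M_\eps f^\eps \to L_0^{-1}f_0$ strongly in $L^2(\omega)$. On the other hand, the homogenization results of Section~3 (Theorem \ref{main appen}, Proposition \ref{quasiperiodic}, and their non-quasiperiodic analogues, depending on the regime of $\alpha,\beta$) identify the strong $L^2(\omega)$ limit of $\hat w^\eps$ precisely with the same object, through the weak limit $f_0$ of the data. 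Combining, $\|\hat w^\eps - L_0^{-1}M_\eps f^\eps\|_{L^2(\omega)} \to 0$ along the subsequence, and the elementary bound $|||E_\eps u|||_{Z_\eps}^2 \le C\|u\|_{L^2(\omega)}^2$ (again from \eqref{boundk}) transfers this to the $Z_\eps$ norm, proving the first, subsequential, part of the corollary.

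The uniform estimate \eqref{estimatenu} will then follow by the standard contradiction argument: if it failed, one would find $\delta>0$, $\eps_n\to 0$, and $f^{\eps_n}$ with $|||f^{\eps_n}|||_{Z_{\eps_n}}=1$ violating the subsequential convergence just established. The main obstacle is that the homogenization step in Section~3 is only qualitative, so the resulting $\nu(\eps)$ in \eqref{estimatenu} comes out via compactness rather than as a computable function of $\eps$; extracting an explicit decay rate would require quantitative corrector estimates in the quasiperiodic and reiterated-homogenization regimes, which is substantially more delicate and is not needed for the spectral and nonlinear consequences drawn afterwards.
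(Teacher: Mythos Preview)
The paper gives no explicit proof of this corollary; it simply asserts that Corollaries~\ref{comp} and~\ref{resolvent} follow ``easily'' by rewriting the convergence results of Sections~2 and~3 in the abstract framework just introduced. Your approach---splitting via the reduced solution $\hat w^\eps$, invoking Proposition~\ref{main-reduction} for the first piece, the homogenization theorems of Section~3 for the second, and the standard contradiction argument to upgrade subsequential convergence to the uniform operator-norm estimate~\eqref{estimatenu}---is precisely the natural way to flesh out that implicit argument and is what the paper intends.
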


Notice that Corollary \ref{comp} implies that
$L_\eps$ is a closed operator, has compact resolvent, zero belongs to its resolvent, denoted by $\rho(L_\eps)$, and $L_\eps^{-1} \stackrel{CC}{\longrightarrow}L_0^{-1}$. Moreover, Corollary \ref{resolvent} gives the convergence of the resolvent operators.

With this convergence we can show now the spectral convergence of the operators. Observe first that since the operators $L_\eps$, $L_=$  are selfadjoint and with compact resolvent, the spectrum is only discrete and real. Hence, let us denote by $\{\lambda_n^\eps\}_{n=1}^\infty\subset \R$ the set of eigenvalues ordered and counting multiplicity of the operator $L_\eps$ for $0\leq \eps\leq \eps_0$, for certain $\eps_0>0$ and let us denot by $\{\varphi_n^\eps\}_{n=1}^\infty$ a corresponding complete system of orthonormal eigenfunctions of $L_\eps$.    Notice also  that the corresponding set of eigenfucntions is not unique since we can always change the sign of an eigenfunction of a simple eigenvalue and in case an eigenvalue is multiple we have different choices to choose a base in the eigenspace associated to this multiple eigenvalue

Consequently, see Lemma 4.10 in \cite{ACarLo1} for the details, we have the following result about the spectrum convergence of operator $L_\eps$.
\begin{theorem}\label{spectral-convergence}
Let $L_\eps$ and $L_0$ the operators considered in \eqref{opLe} and \eqref{opL0} respectively. Then we have the spectral convergence of $L_\eps$ to $L_0$. 
That is, for each sequence $\eps_k\to 0$ there exists a subsequence, that we still denote by $\eps_k$, and a choice for a complete system of eigenfunctions of $L_0$, $\{\varphi_n^0\}_{n=1}^\infty$,   such that the following statements hold:
\begin{enumerate}
\item[i)] $\lambda_n^{\eps_k}\kti \lambda_n^0$ for each $n\in \N$, 
\item[ii)]  $|||\varphi_n^{\eps_k}-E_{\eps_k}\varphi_n^0|||_{Z_\eps^{1/2}}\kti 0$, for each $n\in \N$. 
\end{enumerate}
\end{theorem}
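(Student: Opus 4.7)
}
The strategy is to transfer the compact convergence of resolvents obtained in Corollary \ref{comp} into spectral convergence in the standard way, treating eigenvalues via min--max and eigenfunctions via the compactness built into $L_\eps^{-1}\stackrel{CC}{\longrightarrow}L_0^{-1}$. Since each $L_\eps$ is positive, selfadjoint, and has compact resolvent, the reciprocals $\mu_n^\eps := 1/\lambda_n^\eps$ are the eigenvalues of the compact selfadjoint operators $L_\eps^{-1}$ on $Z_\eps$, and the plan is to prove $\mu_n^{\eps_k}\to \mu_n^0$ together with convergence of an associated eigenfunction.

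\emph{Upper semicontinuity of $\lambda_n^\eps$.} Using the min--max formula
\[
\lambda_n^\eps \;=\; \inf_{\substack{V\subset Z_\eps^{1/2}\\ \dim V=n}}\; \sup_{u\in V\setminus\{0\}}\frac{|||\nabla u|||^2_{Z_\eps}+|||u|||^2_{Z_\eps}}{|||u|||^2_{Z_\eps}},
\]
I would plug in the $n$-dimensional trial space $V_\eps=\mathrm{span}\{E_\eps\varphi_1^0,\ldots,E_\eps\varphi_n^0\}$. Because $E_\eps\varphi$ is constant in $y$, direct computation gives $|||E_\eps\varphi|||_{Z_\eps}\to\|\varphi\|_{Z_0}$ and $|||E_\eps\varphi|||_{Z_\eps^{1/2}}\to\|\varphi\|_{Z_0^{1/2}}$ (the scale factor $1/\eps$ in the $Z_\eps$ inner product compensates exactly for the thickness), so the Rayleigh quotient restricted to $V_\eps$ tends to that on $\mathrm{span}\{\varphi_1^0,\ldots,\varphi_n^0\}$. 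This yields $\limsup_{\eps\to 0}\lambda_n^\eps\le\lambda_n^0$ for every $n$.

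\emph{Lower semicontinuity and eigenfunction convergence.} Here the compact convergence is essential. Normalizing $|||\varphi_n^{\eps_k}|||_{Z_{\eps_k}}=1$, the identity $\varphi_n^{\eps_k}=\lambda_n^{\eps_k}L_{\eps_k}^{-1}\varphi_n^{\eps_k}$ combined with the upper bound on $\lambda_n^{\eps_k}$ (from the previous step) shows that $\{\varphi_n^{\eps_k}\}$ is the image of a bounded family under the compactly convergent operators $L_{\eps_k}^{-1}$. By Definition \ref{ compact-Convergence}, a subsequence satisfies $|||\varphi_n^{\eps_k}-E_{\eps_k}\psi_n|||_{Z_{\eps_k}}\to 0$ for some $\psi_n\in Z_0$ with $\|\psi_n\|_{Z_0}=1$. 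A second application of compact convergence to the equation $L_{\eps_k}^{-1}\varphi_n^{\eps_k}=\mu_n^{\eps_k}\varphi_n^{\eps_k}$ lets me pass to the limit and obtain $L_0^{-1}\psi_n=\mu\psi_n$ for $\mu=\lim\mu_n^{\eps_k}>0$; hence $\psi_n$ is an eigenfunction of $L_0$ with eigenvalue $1/\mu$. Combined with the upper semicontinuity, a standard inductive argument (ordering eigenvalues and exploiting the $Z_{\eps_k}$-orthogonality of $\{\varphi_1^{\eps_k},\ldots,\varphi_n^{\eps_k}\}$, which passes to $Z_0$-orthogonality of $\{\psi_1,\ldots,\psi_n\}$ in the limit) forces $1/\mu=\lambda_n^0$ and identifies $\psi_n$ as the $n$-th element of a complete orthonormal system of eigenfunctions of $L_0$. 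Upgrading the convergence from $Z_{\eps_k}$ to $Z_{\eps_k}^{1/2}$ follows from multiplying $\varphi_n^{\eps_k}-E_{\eps_k}\psi_n$ by $L_{\eps_k}$ and using the energy identity together with the already established convergence of $\lambda_n^{\eps_k}$.

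\emph{Main obstacle.} The delicate point is the bookkeeping at multiple eigenvalues: the index $n$ is preserved on the eigenvalue side by the min--max argument, but on the eigenfunction side the limits $\psi_n$ only lie in the eigenspace of $\lambda_n^0$, and the particular orthonormal basis depends on the subsequence. This is precisely why the statement fixes a choice of $\{\varphi_n^0\}$ along the extracted subsequence rather than a priori. Modulo this standard diagonal/Gram--Schmidt argument, all the analytic content is provided by Corollary \ref{comp} and Corollary \ref{resolvent}, and the remaining steps are exactly those carried out in \cite[Lemma 4.10]{ACarLo1}.
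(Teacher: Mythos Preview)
Your overall outline is reasonable, but the upper-semicontinuity step contains a genuine gap. You claim that $|||E_\eps\varphi|||_{Z_\eps^{1/2}}\to\|\varphi\|_{Z_0^{1/2}}$ and hence that the Rayleigh quotient of $L_\eps$ on $V_\eps=\mathrm{span}\{E_\eps\varphi_1^0,\ldots,E_\eps\varphi_n^0\}$ converges to the Rayleigh quotient of $L_0$ on $\mathrm{span}\{\varphi_1^0,\ldots,\varphi_n^0\}$. This is false in the present homogenization setting. Since $E_\eps\varphi$ is constant in $y$,
\[
|||E_\eps\varphi|||_{Z_\eps^{1/2}}^2=\frac{1}{\eps}\int_{R^\eps}(|\nabla_x\varphi|^2+|\varphi|^2)\,dx\,dy=\int_\omega K_\eps(x)\big(|\nabla\varphi|^2+|\varphi|^2\big)\,dx\ \longrightarrow\ \big(\mathcal{M}(g)+\mathcal{M}(h)\big)\int_\omega\big(|\nabla\varphi|^2+|\varphi|^2\big)\,dx,
\]
whereas $\|\varphi\|_{Z_0^{1/2}}^2=(L_0\varphi,\varphi)_{Z_0}=\int_\omega A_0\nabla\varphi\cdot\nabla\varphi\,dx+(\mathcal{M}(g)+\mathcal{M}(h))\int_\omega|\varphi|^2\,dx$. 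In general $A_0\neq(\mathcal{M}(g)+\mathcal{M}(h))I$ (one has $A_0\le(\mathcal{M}(g)+\mathcal{M}(h))I$, with strict inequality whenever $K_\eps$ genuinely oscillates), so your min--max with naive extensions only yields $\limsup_\eps\lambda_n^\eps\le\lambda_n(-\Delta+I,\omega)$, which is strictly larger than $\lambda_n^0$. Your orthogonality/compactness argument in the second step gives $\lambda_n^0\le\lim\lambda_n^{\eps_k}$; without a sharp upper bound you cannot close the loop. Repairing the min--max route would require corrector-adjusted test functions, i.e.\ exactly the homogenization information you are trying to bypass.

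The paper avoids this by working with spectral projections rather than Rayleigh quotients: it writes $Q_\eps(\lambda_0)=\frac{1}{2\pi i}\int_{|z-\lambda_0|=\delta}(zI-L_\eps)^{-1}\,dz$ and uses the compact convergence $L_\eps^{-1}\stackrel{CC}{\longrightarrow}L_0^{-1}$ together with Lemma~\ref{key} to obtain $Q_\eps(\lambda_0)\to Q_0(\lambda_0)$. Since these are finite-rank projections, convergence of their ranks forces exactly $m=\mathrm{rank}\,Q_0(\lambda_0)$ eigenvalues of $L_\eps$ into each $\delta$-neighborhood of $\lambda_0$, and convergence of their ranges gives the eigenfunction statement. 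Upper and lower semicontinuity come out simultaneously, and the argument never evaluates the $L_\eps$-energy on a specific trial function; all the homogenization content is already packaged into Corollary~\ref{comp}. Your compactness step for the eigenfunctions is essentially the paper's; it is only the min--max half that fails.
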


\begin{proof}  We will give an indication on how to obtain this result. The proof is based on the convergence of the spectral projections.  If $\lambda_0\in \R$ is an eigenvalue of $L_0$  of multiplicity $m\in \N$, (for instance $\lambda_{n}^0<\lambda_0=\lambda_{n+1}^0=\ldots=\lambda_{n+m}^0<\lambda_{n+m+1}^0$), then we have a small $\delta>0$ such that $\lambda_0$ is the unique spectral value in the set $\mathcal{O}(\lambda_0, \delta)=\{\lambda \in \mathcal(C): |\lambda-\lambda_0|\leq\delta\}$ and the projection over the eigenspace generated by $[\varphi_{n+1}^0,\ldots, \varphi_{n+m}^0]$ is given by 

$$Q_0(\lambda_0)=\frac{1}{2\pi i}\int_{|z-\lambda_0|=\delta}(zI-L_0)^{-1}dz,$$
which can be rewritten as
$$Q_0(\lambda_0)=L_0^{-1}\frac{1}{2\pi i}\int_{|z-\lambda_0|=\delta}(zL_0^{-1}-I)^{-1}dz.$$
Using now the compact convergence of $L_\eps^{-1}$ to $L_0^{-1}$ together with Lemma \ref{key}, allows us to show that the operators 
$$Q_\eps(\lambda_0)=\frac{1}{2\pi i}\int_{|z-\lambda_0|=\delta}(zI-L_\eps)^{-1}dz,$$
is well defined and  converges to the operator $Q_0$.  Both operators have finite dimensional range. This implies that necesarilly for $\eps$ small enough we have eigenvalues of $L_\eps$ in the set $\mathcal{O}(\lambda_0, \delta)$ and the combined multiplicity of all eigenvalues in this neighborhood is $m$. Moreover the convergence of the spectral projections imply the convergence of the eigenfunctions stated above.  We refer to \cite{ACarLo1, CarPis} for details. \end{proof}

We can also obtain the convergence of the linear semigroups generated by the operators $L_\eps$  and $L_0$, denoted by $e^{-L_\eps t}$ and $e^{-L_0 t}$.  


\begin{theorem}
There exists a function $\nu: (0, \eps_0] \rightarrow (0,\infty)$, $\nu(\eps) \eto 0$, and numbers $1/2<\gamma<1$, $0<b<1$ such that
\begin{equation}\label{estimatesemi}
||e^{-L_\eps t}- E_\eps e^{-L_0 t}M_\eps||_{\mathcal{L}(Z_\eps, Z_\eps^{1/2})}\leq \nu(\eps)e^{-bt}t^{-\gamma}, \quad \forall t>0.
\end{equation}
\end{theorem}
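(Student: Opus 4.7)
The plan is to prove the semigroup convergence by a standard application of the Dunford (Cauchy) integral representation of analytic semigroups, combined with a $\lambda$-dependent refinement of the resolvent convergence already obtained in Corollary \ref{resolvent}. Since $L_\eps$ and $L_0$ are self-adjoint positive operators with compact resolvent and spectrum contained in $[1,\infty)$, they are sectorial with uniform (in $\eps$) sectorial constants, so $-L_\eps$ and $-L_0$ generate analytic semigroups and one can write
$$e^{-L_\eps t}=\frac{1}{2\pi i}\int_{\Gamma}e^{-\lambda t}(\lambda I+L_\eps)^{-1}\,d\lambda,\qquad e^{-L_0 t}=\frac{1}{2\pi i}\int_{\Gamma}e^{-\lambda t}(\lambda I+L_0)^{-1}\,d\lambda,$$
on a common sectorial contour $\Gamma\subset\{\mathrm{Re}\,\lambda\geq b\}$ for some $0<b<1$, which is possible since the spectra of both operators stay uniformly bounded away from $\Gamma$ thanks to Lemma \ref{key}.

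The first key step is to promote Corollary \ref{resolvent}, which is stated only at $\lambda=0$, to a uniform estimate along $\Gamma$. Using the resolvent identity
$$(\lambda I+L_\eps)^{-1}=L_\eps^{-1}-\lambda\,L_\eps^{-1}(\lambda I+L_\eps)^{-1},$$
together with the analogous one for $L_0$, and the fact that $M_\eps E_\eps=I_{Z_0}$, I would derive a bound of the form
$$\bigl\|(\lambda I+L_\eps)^{-1}-E_\eps(\lambda I+L_0)^{-1}M_\eps\bigr\|_{\mathcal{L}(Z_\eps)}\leq \frac{C\,\tilde\nu(\eps)}{1+|\lambda|},\qquad \lambda\in\Gamma,$$
with $\tilde\nu(\eps)\to 0$, where the decay in $|\lambda|$ comes from the uniform sectorial bound $\|(\lambda I+L_\eps)^{-1}\|_{\mathcal{L}(Z_\eps)}\leq C/(1+|\lambda|)$ and the same for $L_0$, and the smallness factor $\tilde\nu(\eps)$ is inherited from $\|L_\eps^{-1}-E_\eps L_0^{-1}M_\eps\|_{\mathcal{L}(Z_\eps)}\leq \nu(\eps)$.

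The second step is to upgrade the target norm from $Z_\eps$ to $Z_\eps^{1/2}$. For this I would use the uniform (in $\eps$) sectorial estimate
$$\bigl\|L_\eps^{1/2}(\lambda I+L_\eps)^{-1}\bigr\|_{\mathcal{L}(Z_\eps)}\leq\frac{C}{(1+|\lambda|)^{1/2}},$$
together with the equivalence $\|u\|_{Z_\eps^{1/2}}\simeq \|L_\eps^{1/2}u\|_{Z_\eps}$ and compatibility of $E_\eps$ with fractional powers up to admissible errors, to obtain
$$\bigl\|(\lambda I+L_\eps)^{-1}-E_\eps(\lambda I+L_0)^{-1}M_\eps\bigr\|_{\mathcal{L}(Z_\eps,Z_\eps^{1/2})}\leq \frac{C\,\tilde\nu(\eps)^{1-\delta}}{(1+|\lambda|)^{1-\delta}},$$
for a small $\delta>0$. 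Inserting this into the contour integral representation of $e^{-L_\eps t}-E_\eps e^{-L_0 t}M_\eps$, deforming $\Gamma$ so that $\mathrm{Re}\,\lambda\geq b$ on it, and estimating $|e^{-\lambda t}|$ yields
$$\bigl\|e^{-L_\eps t}-E_\eps e^{-L_0 t}M_\eps\bigr\|_{\mathcal{L}(Z_\eps,Z_\eps^{1/2})}\leq C\,\tilde\nu(\eps)^{1-\delta}\,e^{-bt}\int_{\Gamma}\frac{|e^{-(\lambda-b)t}|}{(1+|\lambda|)^{1-\delta}}\,|d\lambda|,$$
and a standard change of variable on $\Gamma$ shows that this last integral is bounded by $Ct^{-\gamma}$ with $\gamma\in(1/2,1)$, yielding the claim after redefining $\nu(\eps):=C\tilde\nu(\eps)^{1-\delta}$.

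The main obstacle is the simultaneous coordination of three ingredients: the $\eps$-uniform sectorial estimates (both for the resolvents and for $L_\eps^{1/2}$ applied to them), the transfer of the single-point resolvent convergence of Corollary \ref{resolvent} into a $|\lambda|$-decaying estimate valid on the whole contour, and the half-derivative loss when landing in $Z_\eps^{1/2}$, which forces $\gamma>1/2$ (and correspondingly a slightly worse exponent than one in the denominator of the resolvent difference). The polynomial singularity $t^{-\gamma}$ at $t=0$ is unavoidable because of the jump in the fractional power scale, while the exponential factor $e^{-bt}$ just reflects that both spectra are contained in $[1,\infty)$; the purely technical bookkeeping of these estimates is precisely the one carried out in the references \cite{ACar,ACarLo1,CarPis,Per}, to which we refer for the full details.
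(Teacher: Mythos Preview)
Your proposal is sound and in fact coincides with the \emph{second} of the two arguments the paper itself sketches: the paper first indicates a proof via the spectral decomposition $e^{-L_\eps t}u_\eps=\sum_n e^{-\lambda_n^\eps t}(u_\eps,\varphi_n^\eps)\varphi_n^\eps$, using directly the eigenvalue and eigenfunction convergence of Theorem~\ref{spectral-convergence} (following Proposition~2.7 of \cite{ACar}), and then explicitly remarks that the same result can be obtained from the Dunford contour representation $e^{-L_\eps t}=\frac{1}{2\pi i}\int_{\tilde\Gamma}e^{\mu t}(\mu I+L_\eps)^{-1}d\mu$ together with the resolvent convergence, referring to \cite{ACarLo3}. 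You have chosen the latter route, so there is no genuine methodological divergence; the spectral-series argument is somewhat more elementary here because self-adjointness is available, while the contour-integral argument you outline is the one that generalizes beyond the self-adjoint setting.

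One small technical caveat: the identity $M_\eps E_\eps=I_{Z_0}$ you invoke is not correct as written, since $(M_\eps E_\eps u)(x)=\frac{1}{\eps}\int_{-\eps h(x/\eps^\alpha)}^{\eps g(x/\eps^\beta)}u(x)\,dy=G_\eps(x)\,u(x)$ with $G_\eps=g(\cdot/\eps^\beta)+h(\cdot/\eps^\alpha)$, not $u(x)$. This does not break the argument (the extra factor $G_\eps$ is uniformly bounded above and below and its effect is absorbed into the resolvent-difference estimate exactly as in \cite{ACar,ACarLo3,CarPis}), but the algebraic manipulation with the resolvent identity needs to be written accordingly rather than relying on $M_\eps E_\eps=I$.
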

\begin{proof}
The proof of this result follows the same line of proof as Proposition 2.7 in \cite{ACar}. Using the spectral decomposition of the operators, we can write

$$e^{-L_\eps t}u_\eps=\sum_{n=1}^\infty e^{-\lambda_n^\eps t} (u_\eps, \varphi_n^\eps)_{L^2(R_\eps)}\varphi_n^\eps,$$ 
But the convergence of the eigenvalues and eigenfunctions obtained above in Theorem \ref{spectral-convergence} and with some computations as the ones performed in the proof of Proposition 2.7 in \cite{ACar} we obtain the result. 

Let us mention that the proof can also be done from a more functional analytic framework using the representation of the linear semigroups as 
$$e^{ L_\eps t}=\frac{1}{2\pi i}\int_{\tilde{\Gamma}} e^{\mu t} (\mu  I+ L_\eps )^{-1} d\mu,\quad 0\leq \eps\leq \eps_0,$$
where $\tilde \Gamma$ is the border of sector $\sum_{-1,\phi}=\{\mu \in \mathcal(C) : |arg (\mu+1)| \leq \phi\}$, $\frac{\pi}{2}<\phi<\pi$, oriented in such a way the imaginary part of $\mu$ increases as $\mu$ describes the curve $\tilde \Gamma$.  Using the convergence of the resolvent operators, Corollary \ref{comp} we can also prove the result, see \cite{ACarLo3} for instance. 
\end{proof}

%
%
%
%
%
%
%

Therefore, we have analyzed the behavior of the linear parts of problem \eqref{1OPI0 parabolic} as $\eps$ goes to zero. Finally, we show  the upper semicontinuity of the attractors and of the set of stationary states. This result is a consequence of the relation between the continuity of the linear semigroups with the continuity of the nonlinear semigroups through the Variation of Constants Formula.

Let $f:\R \rightarrow \R$ be a bounded $\mathcal{C}^2$-function with bounded derivatives up to second order and satisfying condition \eqref{dissipative}. Then, it is known that the solutions of problems \eqref{1OPI0 parabolic} and \eqref{homogenized problem par} are globally defined and we can associate to them the nonlinear semigroups $\{T_\eps(t)| 0\leq t\}$ and
 $\{T_0(t)| 0\leq t\}$.
\begin{theorem}
For each $\tau, R>0$ there exist a function $\bar \nu: (0, \eps_0] \rightarrow (0,\infty)$, $\bar\nu(\eps) \eto 0$, such that
\begin{equation}\label{convsemi}
|||T_\eps(t)w^\eps- E_\eps T_0(t)M_\eps w^\eps|||_{ Z_\eps^{1/2}}\leq \bar \nu(\eps) t^{-\gamma}, \quad \forall t \in (0,\tau), \|w^\eps\|_{Z^\eps}\leq R.
\end{equation}
Moreover, the family of attractors $\{ \mathcal{A}_\eps| \eps \in [0, \eps_0]\}$ of  problem \eqref{1OPI0 parabolic} is upper semicontinuous at $\eps=0$ in $Z_\eps^{1/2}$, in the sense that
\begin{equation}\label{convattractors}
\sup_{\varphi^\eps \in \mathcal{A}_\eps}\Big\{\inf_{\varphi \in \mathcal{A}_0}\{|||\varphi^\eps- E_\eps \varphi\}|||_{Z_\eps^{1/2}}\Big\}\eto 0.
\end{equation}
If $\mathcal{E}_\eps$ and $\mathcal{E}_0$ are the set of stationary states of problems \eqref{1OPI0 parabolic} and \eqref{homogenized problem par} respectively, then they satisfy the following convergence
\begin{equation}\label{convequilibria} 
 \sup_{\varphi^\eps \in \mathcal{E}_\eps}\Big\{\inf_{\varphi \in \mathcal{E}_0}\{|||\varphi^\eps- E_\eps \varphi\}|||_{Z_\eps^{1/2}}\Big\}\eto 0.
 \end{equation}
\end{theorem}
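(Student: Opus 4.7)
The plan is to promote the linear estimate \eqref{estimatesemi} to the nonlinear setting via the variation of constants formula and a singular Gronwall argument, and then to deduce the upper semicontinuity statements for attractors and equilibria from the general continuity theory for dissipative semigroups. First I would write
\begin{equation*}
T_\eps(t) w^\eps = e^{-L_\eps t} w^\eps + \int_0^t e^{-L_\eps(t-s)} f(T_\eps(s) w^\eps)\, ds,
\end{equation*}
together with the analogous formula for $T_0(t) M_\eps w^\eps$, apply $E_\eps$ to the latter, and subtract. Since $E_\eps$ acts by extending functions constantly in the $y$-variable and $f$ is a Nemytskii operator, one has $E_\eps f(u) = f(E_\eps u)$, so the difference decomposes into three contributions: the linear part $e^{-L_\eps t} w^\eps - E_\eps e^{-L_0 t} M_\eps w^\eps$, controlled directly by \eqref{estimatesemi}; a term of the form $\int_0^t [e^{-L_\eps(t-s)} - E_\eps e^{-L_0(t-s)} M_\eps]\, g_\eps(s)\, ds$ with $g_\eps(s) = f(E_\eps T_0(s) M_\eps w^\eps)$, which is again controlled by \eqref{estimatesemi} because $f$ is bounded; and a Lipschitz-type term $\int_0^t e^{-L_\eps(t-s)} [f(T_\eps(s) w^\eps) - f(E_\eps T_0(s) M_\eps w^\eps)]\, ds$.

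Setting $\phi(t) := |||T_\eps(t) w^\eps - E_\eps T_0(t) M_\eps w^\eps|||_{Z_\eps^{1/2}}$ and using the smoothing bound $\|e^{-L_\eps(t-s)}\|_{\mathcal{L}(Z_\eps, Z_\eps^{1/2})} \leq C(t-s)^{-1/2} e^{-b(t-s)}$ together with the uniform Lipschitz bound on $f$ leads to
\begin{equation*}
\phi(t) \leq \nu(\eps)\, t^{-\gamma} + C\, \nu(\eps) \int_0^t (t-s)^{-\gamma}\, ds + C \int_0^t (t-s)^{-1/2}\, \phi(s)\, ds.
\end{equation*}
A singular Gronwall inequality (see \cite{He81}) combined with $1/2 < \gamma < 1$ then yields $\phi(t) \leq \bar\nu(\eps)\, t^{-\gamma}$ uniformly for $t \in (0,\tau)$ and $\|w^\eps\|_{Z^\eps} \leq R$. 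The dissipative condition \eqref{dissipative} is used here to ensure that the relevant trajectories remain in a uniformly bounded set of $Z_\eps^{1/2}$, on which $f$ is Lipschitz with a uniform constant.

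For \eqref{convattractors}, I would use that each $\varphi^\eps \in \mathcal{A}_\eps$ lies on a full bounded orbit $\{\xi^\eps(t)\}_{t \in \R} \subset \mathcal{A}_\eps$. The dissipative condition and Theorem \ref{spectral-convergence} give a uniform bound $\sup_\eps \sup_{\varphi^\eps \in \mathcal{A}_\eps} |||\varphi^\eps|||_{Z_\eps^{1/2}} < \infty$. Writing $\varphi^{\eps_k} = T_{\eps_k}(t_0) \xi^{\eps_k}(-t_0)$ for a fixed $t_0 > 0$, Corollary \ref{resolvent} together with the compact embedding $Z_0^{1/2} \hookrightarrow Z_0$ supplies a subsequential limit $v_0 \in Z_0$ of $M_{\eps_k}\xi^{\eps_k}(-t_0)$, and the nonlinear estimate \eqref{convsemi} identifies the limit of $\varphi^{\eps_k}$ with $T_0(t_0) v_0 \in \mathcal{A}_0$. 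The same extraction, applied to equilibria $\varphi^\eps \in \mathcal{E}_\eps \subset \mathcal{A}_\eps$ which satisfy $\varphi^\eps = T_\eps(t) \varphi^\eps$ for every $t > 0$, produces a limit $\varphi$ with $\varphi = T_0(t) \varphi$ for all $t>0$, i.e., $\varphi \in \mathcal{E}_0$, yielding \eqref{convequilibria}.

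The main obstacle is the singular factor $t^{-\gamma}$ inherited from the smoothing estimate: it forces the use of a singular Gronwall inequality and a careful choice $1/2 < \gamma < 1$ so that the iterated fractional integrals $\int_0^t (t-s)^{-1/2} s^{-\gamma}\, ds$ remain finite and bookkeeping of the blow-up at $t=0$ is preserved. Once this is in place, the abstract machinery developed in \cite{ACar,ACarLo1,Ha,He81} transfers the linear convergence of the previous section to the nonlinear regime essentially verbatim.
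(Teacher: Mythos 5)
Your treatment of the first estimate \eqref{convsemi} follows the paper's route exactly: variation of constants, a decomposition into the linear part controlled by \eqref{estimatesemi}, a commutator-type term, and a Lipschitz term closed by a singular Gronwall inequality. This is the same argument the paper sketches (deferring the bookkeeping to Proposition 3.1 of \cite{ACar}). The equilibria argument is also the paper's: write $\varphi^\eps=T_\eps(t)\varphi^\eps$, pass to the limit using \eqref{convsemi}, and conclude $\varphi^0=T_0(t)\varphi^0$ for all $t>0$.

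The genuine gap is in the attractor step. You fix a single $t_0>0$, write $\varphi^{\eps_k}=T_{\eps_k}(t_0)\xi^{\eps_k}(-t_0)$, extract a limit $v_0$ of $M_{\eps_k}\xi^{\eps_k}(-t_0)$, and then assert $T_0(t_0)v_0\in\mathcal{A}_0$. That membership does not follow: $v_0$ is only known to lie in the closure of the bounded set $\cup_{0\le\eps\le\eps_0}M_\eps\mathcal{A}_\eps$, so $T_0(t_0)v_0$ lies in the image under $T_0(t_0)$ of a bounded set, which for a \emph{fixed} $t_0$ is in general strictly larger than $\mathcal{A}_0$; asserting $v_0\in\mathcal{A}_0$ at this point would be circular. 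The paper avoids this by never trying to land exactly on $\mathcal{A}_0$: given $\eta>0$, it uses that $\mathcal{A}_0$ \emph{attracts} the bounded set $B\supset\cup_{0\le\eps\le\eps_0}M_\eps\mathcal{A}_\eps$ to choose $\tau=\tau(\eta)$ with $T_0(\tau)B$ within $\eta/2$ of $\mathcal{A}_0$, and then uses the invariance $\mathcal{A}_\eps=T_\eps(\tau)\mathcal{A}_\eps$ together with \eqref{convsemi} at time $\tau$ (which costs only $\bar\nu(\eps)\tau^{-\gamma}<\eta/2$ for $\eps$ small) to conclude $\eta$-closeness. Your argument can be repaired either by adopting this scheme, or by a diagonal extraction over $t_0\in\N$ producing a complete bounded orbit of $T_0$ through the limit point, which then lies in $\mathcal{A}_0$ by maximality; as written, the fixed-$t_0$ version does not close. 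A secondary remark: the uniform bound on $\cup_\eps\mathcal{A}_\eps$ comes from the dissipativity assumption \eqref{dissipative} (uniform a priori bounds for large time), not from the spectral convergence of Theorem \ref{spectral-convergence}.
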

\begin{proof}
We follow the proof of Proposition 3.1 from \cite{ACar}.  To prove \eqref{convsemi} we use the Variations of Constant Formula, that is, 
$$T_\eps(t)w^\eps=e^{L_\eps t}w^\eps +\int_0^t e^{L_\eps(t-s)}f(T_\eps(s)w^\eps)ds, \quad 0\leq \eps\leq \eps_0.$$
Substractiong $T_\eps(t)w^\eps$ from $E_\eps T_0(t)M_\eps w_\eps$ and taking norms in $Z_\eps^{1/2}$ we obtain
\begin{align*}
|||T_\eps(t)w^\eps- E_\eps T_0(t)M_\eps w^\eps|||_{ Z_\eps^{1/2}}\leq |||e^{-L_\eps t}w^\eps- E_\eps e^{-L_0 t}M_\eps w^\eps|||_{Z_\eps^{1/2}}\\
+ \int_0^t |||e^{-L_\eps (t-s)} f(T_\eps(s)w^\eps)- E_\eps e^{-L_0 (t-s)} f(T_0(s)M_\eps w^\eps)|||_{Z_\eps^{1/2}}\, ds.
\end{align*}
Adding and substracting appropriate terms in the integral (as in Proposition 3.1 \cite{ACar})  using estimate \eqref{estimatesemi} and Gronwall inequality we get \eqref{convsemi}.

The upper semicontinuity of the attractors $ \mathcal{A}_\eps$ follows from 
the continuity of the nonlinear semigroups given by \eqref{convsemi}, the fact that 
$\cup_{0\leq \eps \leq \eps_0} M_\eps \mathcal{A}_\eps$ is a bounded set in $Z_0^{1/2}$,  that the attractor $\mathcal{A}_0$ attracts bounded sets (in particular it attracts $\cup_{0\leq \eps \leq \eps_0} M_\eps \mathcal{A}_\eps$) and the fact that  $\mathcal{A}_0$ is an invariant set for the flow $T_0(t)$, see for instance \cite{Ha, ACar}

we first note that $\cup_{0\leq \eps \leq \eps_0} M_\eps \mathcal{A}_\eps$ is a bounded set in $L^\infty(0,1)$ and in $Z_0$. Then, using the attractivity property of $\mathcal{A}_0$ in $Z_0$ we have that for any $\eta>0$ there exists $\tau>0$ such that 
$$ \inf_{\varphi \in \mathcal{A}_0}|||E_\eps T_0(\tau)M_\eps \varphi^\eps- E_\eps \varphi|||_{Z_\eps^\alpha} < \eta/2, \hbox{ for all } \varphi^\eps \in \mathcal{A}_\eps \hbox{ and }
0\leq \eps \leq \eps_0.$$
Consequently, \eqref{convattractors} is obtained using \eqref{convsemi} and taking into account $\mathcal{A}_\eps$ is an invariant set. 

Finally we show the upper semicontinuity of the set of stationary states $\mathcal{E}_\eps$. For this, we will show that for any sequence $\varphi^\eps \in \mathcal{E}_\eps$ we can get a subsequence that we still denote by $\varphi^\eps$ and a $\varphi^0\in \mathcal{E}_0$ such that 
$|||\varphi^\eps-E_\eps  \varphi^0|||_{Z_0^{1/2}}\to 0$.  But since $\mathcal{E}_\eps\subset \mathcal{A}_\eps$ and we have already proved the uppersemicontinuity of the attractors, we have the existence of $\varphi_0\in \mathcal{A}_0$ such that via a subsequence we have $|||\varphi^\eps-E_\eps  \varphi^0|||_{Z_0^{1/2}}\to 0$.  We need to prove that indeed $\varphi^0\in \mathcal{E}_0$.  But for this, observe that for any $t>0$ since $\varphi^\eps$ is a stationary state we have 
$$|||\varphi^\eps -E_\eps T_0(t)\varphi^0|||_{Z_\eps^{1/2}}=|||T_\eps(t)\varphi^\eps -E_\eps T_0(t)\varphi^0|||_{Z_\eps^{1/2}}\eto 0,$$
where we have used \eqref{convsemi}.  But since $\varphi^\eps- E_\eps \varphi^0\to 0$, we get  $\varphi^0 - T_0(t)\varphi^0=0$. This implies that $\varphi^0\in \mathcal{E}_0$, see Proposition 3.1, \cite{ACar}. 
\end{proof}
%


\begin{thebibliography}{99}






\bibitem{AlBri} G. Allaire and M. Briane, {\it Multiscale convergence and reiterated homogenization}. Proceedings of the Royal Society of Edinburgh: Section A Mathematics, 126 (1996), 297-342.



\bibitem{AnBra} N. Ansini, A. Braides, {\it Homogenization of oscillating boundaries and applications to thin films}, J. Anal. Math., 83 (2001),151-182.


\bibitem{Arr} J. M. Arrieta, \emph{Spectral properties of Schr\"odinger operators under 
perturbations of the domain}, Ph.D. Thesis, Georgia Institute of Technology, (1991).

\bibitem{ACar}J. M. Arrieta, A. N. Carvalho, {\it Spectral Convergence and nonlinear dynamics of reaction-diffusion equations under perturbations of the domain}, Journal of Diff Equations 199 (2004) 143-178.

\bibitem{ACarLo1} J. M. Arrieta, A. N. Carvalho and G. Lozada-Cruz, {\it Dynamics in dumbbell domains I. Continuity of the set of equilibria} Journal of Diff. Equations 231 (2006) 551-597.

\bibitem{ACarLo2}J. M. Arrieta, A. N. Carvalho and G. Lozada-Cruz,{\it Dynamics in dumbbell domains II. The limiting problem}, Journal of Diff. Equations 247 (2009) 174-202. 

\bibitem{ACarLo3} J. M. Arrieta, A. N. Carvalho and G. Lozada-Cruz; {\it Dynamics in dumbbell domains III. Continuity of attractors}, Journal of Diff. Equations 247 (2009) 225-259. 

 \bibitem{ArrCarPerSil} J. M. Arrieta, A. N. Carvalho, M. C. Pereira, R. P. Da Silva.  {\it Semilinear parabolic 
 problems in thin domains with a highly oscillatory boundary}, Nonlinear Analysis: Theory, Methods
 and Applications, Vol 74, 15 (2011), 5111-5132
 
 \bibitem{ArrPer2011} J.M. Arrieta, M. C. Pereira.  {\it Homogenization in a thin domain with an oscillatory boundary}, 
  Journal de Mathématiques Pures et Apliquées, Vol 96, 1 (2011), 29-57.
  
  \bibitem{ArrPer2013} J.M. Arrieta, M.C. Pereira. {\it The Neumann problem in thin domains with very highly oscillatory boundaries}, Journal of Mathematical Analysis and Applications,
 Vol 444, 1 (2013), 86-104.
 
 \bibitem{ArrSan} J.M. Arrieta, E. Santamar\'ia, {\it Distance of attractors of reaction-diffusion equations in thin domains}, Journal of Differential Equations 263, pp 5459-5506, (2017)

\bibitem{ArrVil2014a} J.M. Arrieta, M. Villanueva-Pesqueira. {\it Thin domains with doubly oscillatory boundary},
Mathematical Methods in Applied Science, Vol 37, 2 (2014), 158-166 .


\bibitem{ArrVi2017} Jose M. Arrieta ; M. Villanueva-Pesqueira, ''Thin domains with non-smooth periodic oscillatory boundaries'' \emph{Journal of mathematical analysis and applications}, Vol 446, 1 (2017), 30-164. 

\bibitem{BabVis} A. V. Babin and M. I. Vishik, Attractors of Evolution Equations, Studies in Mathematics and its Applications, 25, North-Holland Publishing Co., Amsterdam, (1992).

 \bibitem{BePyChe}A. G. Belyaev, A. L. Pyatnitskii, G. A. Chechkin, {\it Asymptotic behavior of a solution to a boundary value problem in a perforated domain with oscillating boundary}, Siberian Math. J., 39 (1998), 621-644. 

 \bibitem{BenLioPap} A. Bensoussan, J. L. Lions, G. Papanicolaou. {\it Asymptotic Analysis for Periodic Structures},  North-Holland Publ. Company (1978). 
 
 \bibitem{Be} A.S. Besicovitch {\it Almost periodic functions}, Dover,(1954).
 
 
  \bibitem{BGG07A} D. Blanchard,  A. Gaudiello, G. Griso, {\it Junction of a periodic family of elastic rods with a thin plate}, Part II. J. Math. Pures Appl. (2) 88 (2007), 149-190.
 
 
\bibitem{Bohr} H. Bohr, {\it Almost periodic functions}, New York: Chelsea, (1947).
 
  
\bibitem{CarLanRob}  A. N. Carvalho, J. Langa, J. C. Robinson, Attractors for Infinite-Dimensional Non-Autonomous Dynamical-Systems, Applied Mathematical Sciences, Vol. 182, Springer, (2012).
  
  \bibitem{CarPis} A. N. Carvalho and S. Piskarev, {\it A general approximation scheme for attractors of abstract parabolic problems}, Numerical Functional Analysis and Optimization 27 (7-8) (2006) 785-829.


 
 
 
\bibitem{C1} J. Casado-D\'iaz M. Luna-Laynez, F.J. Su\'arez-Grau, {\it  Asymptotic Behavior of the Navier-Stokes System in a Thin Domain with Navier Condition on a Slightly Rough Boundary},
 SIAM J. Math. Anal. Vol 45, 3 (2013), 1641–1674.
 

 



\bibitem{CioPau} D. Cioranescu, J. Saint Jean Paulin. {\it Homogenization of Reticulated Structures},
Springer Verlag (1999).




\bibitem{CiorDonato} D.Cioranescu, P. Donato, {\it An introduction to homogenization}, Oxford University Press, (1999)





\bibitem{Ha}J. K. Hale {\it Asymptotic Behavior of Dissipative Systems}, Mathe. Surveys and Monographs 25 MAS (1998).

\bibitem{HR} J. K. Hale and G. Raugel, {\it Reaction-diffusion equation on thin domains}, J. Math. Pures and Appl. (9) 71, no. 1 (1992), 33-95.

\bibitem{He81} D. B. Henry, {\it Geometric Theory of Semilinear Parabolic Equations}, Lectures Notes in Math., 840, Springer-Verlag (1981).



\bibitem{KapVal} O.V. Kapustyan, J. Valero, {\it On the connecgtedness and asymptotic behaviour of solutions of reaction-diffusion systems} Journal of Math Anal and Appl 323, pp. 614-633 (2006) 

\bibitem{KapKasVal} O.V. Kapustyan, P.O Kasyanov, J. Valero, {\it Structure and regularity of the global attractor of a reaction-diffusion equations with Caratheodory's nonlinearity} Nonlinear Analysis 98, pp 13-26 (2014) 

\bibitem{komo}  C. Komo, {\it Influence of surface roughness to solutions of the Boussinesq equations with Robin boundary condition},  Rev Mat Complut. 28, (2015), 123-155.

\bibitem{Koz} S.M. Kozlov, {\it Averaging Differential Operators with Almost-Periodic Rapidly Oscillating Coefficients} Math. USSR-Sb., Vol 35 (1979), 481-498.

\bibitem{MP2} T. A. Mel`nyk and A. V. Popov,  {\it Asymptotic analysis of boundary-value problems in thin perforated domains with rapidly varying thickness}, Nonlinear Oscil. 13 (2010), 1, 57-84.

\bibitem{OleiZi} O. A. Oleinik and V.V. Zhikoz, {\it On the homogenization of elliptic operators with almost-periodic coefficients}. Rend. Sem. Mat. Fis. Milano 52 (1982), 149-166.

\bibitem{Per} M.C. Pereira, {\it Parabolic problems in highly oscillating thin domains}, Ann. Mat. Pura Appl., Vol 194, 4 (2015), 1203-1244.






\bibitem{MeuShaf05}N. Meunier, J. Van Schaftingen, {\it Periodic reiterated homogenization for elliptic functions},J. Math. Pures Appl. (9) Vol 84, 12 (2005), 1716-1743.


 
 \bibitem{R} G. Raugel. {\it Dynamics of partial differential equations on thin domains}
\newblock in Dynamical systems (Montecatini Terme, 1994), 208-315, 
Lecture Notes in Math., 1609, Springer, Berlin, 1995. 

\bibitem{SP} E. S\'anchez-Palencia. {\it Non-Homogeneous Media and Vibration Theory}, Lecture Notes in Physics 127, Springer Verlag (1980)

\bibitem {SelYou} G. R. Sell and Y. You, Dynamics of Evolutionary Equations, Applied Mathe- matical Sciences, 143, Springer (2002).

\bibitem{Stum1}F. Stummel,{\it Diskrete Konvergenz linearer Operatoren I} Math. Ann. 190 (1970) 45-92. 
\bibitem{Stum2} F. Stummel, {\it Diskrete Konvergenz linearer Operatoren II}, Math. Z. 120 (1971) 231-264.  
\bibitem{Stum3} F. Stummel, {\it Diskrete Konvergenz linearer Operatoren III}, Linear Operators and Approximation (Proc. Conf., Oberwolfach, 1971), 196-216, Internat. Ser. Numer. Math., Vol. 20, Birkhuser (1972).
\bibitem{Tem} R. Teman, Infinite-Dimensional Dynamical Systems in Mechanics and Physics, Applied Mathematical Sciences, 68, Springer-Verlag, (1988).
\bibitem{Vai1} G. Vainikko, {\it Approximative methods for nonlinear equations (two approaches to the convergence problem)}, Nonlinear Analysis, Theory, Methods and Applications 2 (1978) 647-687. 
\bibitem{Vai2} G. Vainikko, {\it Funktionalanalysis der Diskretisierungsmethoden}, Teubner-Texte zur Mathematik. B. G. Teubner Verlag, Leipzig (1976).

\end{thebibliography}
\end{document}